\theoremstyle{plain}
\newtheorem{lemma}{Lemma}[section]
\newtheorem{theorem}[lemma]{Theorem}
\newtheorem{proposition}[lemma]{Proposition}
\newtheorem*{theorem*}{Theorem}
\newtheorem*{definition*}{Definition}
\newtheorem{remark}[lemma]{Remark}
 \newcommand{\CC}{\mathrm{C}}
 \newcommand{\V}{\mathrm{V}}
 \newcommand{\T}{\mathbb{T}}
 \newcommand{\N}{\mathbb{N}}
 \newcommand{\C}{\mathbb{C}}
 \newcommand{\K}{\mathrm{K}}
 \newcommand{\fa}{f\circ\varphi}
 \newcommand{\ga}{g\circ\varphi}
 \newcommand{\wa}{w\circ\varphi}
 \newcommand{\Her}{\mathrm{Her}}
\DeclareMathOperator{\Cu}{Cu} \DeclareMathOperator{\Lsc}{Lsc}
 \DeclareMathOperator{\supp}{supp}
\begin{document}

\title{Recovering the Elliott invariant from the Cuntz semigroup}
\date{\today}
\author{Ramon Antoine}\address{RA, FP \& LS: Departament de Matem\`atiques, Universitat Aut\`onoma de Barcelona, 08193 Bellaterra, Barcelona, Spain}\email{ramon@mat.uab.cat, perera@mat.uab.at, santiago@mat.uab.cat}
\author{Marius Dadarlat}\address{MD: Department of Mathematics, Purdue University, West Lafayette, IN 47907, USA}\email{mdd@math.purdue.edu}
\author{Francesc Perera}
%\address{Departament de Matem\`atiques, Universitat Aut\`onoma de Barcelona, 08193 Bellaterra, Barcelona, Spain}\email{ramon@mat.uab.cat, perera@mat.uab.cat}
\author{Luis Santiago}\curraddr{(LS) Department of Mathematics, University of Oregon, Eugene OR 97403, USA}\email{luissant@gmail.com}

\begin{abstract}
Let $A$ be a simple, separable C$^*$-algebra of stable rank one. We
prove that the Cuntz semigroup of $\CC(\T,A)$ is determined by its
Murray-von Neumann semigroup of projections and a certain semigroup
of lower semicontinuous functions (with values in the Cuntz
semigroup of $A$). This result has two consequences. First,
specializing to the case that $A$ is simple, finite, separable and
$\mathcal Z$-stable, this yields a description of the Cuntz
semigroup of $\CC(\T,A)$ in terms of the Elliott invariant of $A$.
Second, suitably interpreted, it shows that the Elliott functor and
the functor defined by the Cuntz semigroup of the tensor product
with the algebra of continuous functions on the circle are naturally
equivalent.
\end{abstract}

\maketitle

%%%%%%% Introduction
\section*{Introduction}

The Cuntz semigroup $\Cu(A)$ of a C$^*$-algebra $A$ is intimately
related to the classification program of simple, separable, and
nuclear algebras. This is a semigroup built out of equivalence
classes of positive elements in the stabilization of the algebra $A$
much in an analogous way as the projection semigroup $V(A)$ is, and
comes equipped with an order that is not algebraic, except for
finite dimensional algebras. One order property -- almost unperforation -- plays
a significant role in classification of such algebras up to isomorphism (see
\cite{toms}). This property is
equivalent to strict comparison, which allows to determine the order
in the semigroup by means of traces.

The Elliott conjecture predicts the existence of a $\K$-theoretic
functor $\mathrm{Ell}$ such that, for unital, simple, separable,
nuclear C$^*$-algebras $A$ and $B$ in a certain class, isomorphism
between $\mathrm{Ell}(A)$ and $\mathrm{Ell}(B)$ can be lifted to a
$^*$-isomorphism of the algebras. The concrete form of the invariant
(known as the Elliott invariant) for which this conjecture has had
tremendous success is the following:
\[
\mathrm{Ell}(A)=((\K_0(A),\K_0(A)^+,
[1_A]),\K_1(A),\mathrm{T}(A),r)\,,
\]
consisting of (ordered) topological $\K$-Theory, the trace simplex,
and the pairing between $\K$-Theory and traces given by evaluating a
trace at a projection (see, e.g. \cite{elliottoms}) (The category
where the said invariant sits will be described later.)

It is possible (and generally agreed) that the largest class for
which classification in its original form (i.e. using the Elliott
invariant as above) may hold consists of those algebras that absorb
the Jiang-Su algebra $\mathcal Z$ tensorially. Indeed, $\mathcal
Z$-stability springs into prominence as a necessary condition for
classification to hold (under the assumption of weak unperforation
on $\K_0$; see \cite{gongjiangsu}). This property of being $\mathcal
Z$-stable stands out as a regularity property for C$^*$-algebras,
together with finite decomposition rank and the condition of strict
comparison alluded to above. Among separable, simple, nuclear
C$^*$-algebras, a conjecture of Toms and Winter (see \cite{tomsprep}, and also
\cite{winterzach}) asserts that these three conditions are
equivalent.

The linkage between the Elliott invariant and the Cuntz semigroup
has been explored in a number of papers (see, e.g. \cite{bpt},
\cite{elliottrobertsantiago}, \cite{pertoms}, \cite{Tikuisis}). One
of the main results in \cite{bpt} recovers the Cuntz semigroup from
the Elliott invariant in a functorial manner, for the class of
simple, unital, $\mathcal Z$-stable algebras. Tikuisis shows, in
\cite{Tikuisis}, that the Elliott invariant  is equivalent to the
invariant $\Cu(\CC(\T,\cdot))$, for simple, unital, non-type I ASH
algebras with slow dimension growth (which happen to be $\mathcal
Z$-stable, as follows from results of Toms and Winter (\cite{atoms},
\cite{winterinv})). One of our main results in this paper confirms
that this equivalence can be extended to all simple, separable,
finite $\mathcal Z$-stable algebras. Thus, from a functorial point
of view and related to the Elliott conjecture, we prove the
following:

\begin{theorem*}
Let $A$ be a simple, unital, nuclear, finite C$^*$-algebra that
absorbs $\mathcal Z$ tensorially. Then:
\begin{enumerate}[{\rm (i)}]

\item There is a functor which recovers the Elliott invariant
$\mathrm{Ell}(A)$ from the Cuntz semigroup $\Cu(\CC(\T,A))$.

\item Viewing the Elliott invariant as a functor from the
category of C$^*$-algebras to the category Cu (where the Cuntz
semigroup naturally lives), there is a natural equivalence of
functors between $\mathrm{Ell}(\cdot)$ and $\Cu(\CC(\T,\cdot))$.
\end{enumerate}

\end{theorem*}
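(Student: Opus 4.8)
The plan is to deduce both statements from the structural theorem stated above --- that for a simple, separable $A$ of stable rank one the $\mathrm{Cu}$-semigroup $\Cu(\CC(\T,A))$ is recovered, functorially, from the pair consisting of the projection monoid $V(\CC(\T,A))$ and the ordered semigroup $\Lsc(\T,\Cu(A))$ of lower semicontinuous $\Cu(A)$-valued functions, together with the natural structure maps between them --- once these two ingredients have been rewritten in terms of $\mathrm{Ell}(A)$. The hypotheses enter as follows: being simple, unital, nuclear, finite and $\mathcal Z$-stable, $A$ has stable rank one (Rørdam), so the structural theorem applies; and $A$ has strict comparison, so by \cite{bpt} its Cuntz semigroup is recovered from $\mathrm{Ell}(A)$, namely $\Cu(A)\cong V(A)\sqcup\mathrm{LAff}_{++}(T(A))$ with $V(A)\cong K_0(A)^+$. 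In particular $\Lsc(\T,\Cu(A))$ depends only on $\mathrm{Ell}(A)$.

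The projection piece I would treat separately: for any simple $A$ of stable rank one I claim a monoid isomorphism identifying $V(\CC(\T,A))$ with the submonoid $\{0\}\sqcup\big((V(A)\setminus\{0\})\times K_1(A)\big)$ of $V(A)\oplus K_1(A)$, compatibly with the canonical maps. It sends the class of a projection $p$ over $\CC(\T,A)$ to the pair $([\,p(1)\,],\ \mathrm{mon}(p))$, where $\mathrm{mon}(p)\in K_1(p(1)M_\infty(A)p(1))=K_1(A)$ is the \emph{monodromy}: writing $\T$ as $[0,1]/(0\sim1)$ and lifting the loop of projections to $p(t)=u_t\,p(1)\,u_t^{*}$ with $u_0=1$, the element $u_1$ commutes with $p(1)$ and its $K_1$-class is the invariant. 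Well-definedness, additivity and completeness of this invariant are routine once one knows that projections over a contractible space are classified by their rank and that stable rank one yields cancellation of projections over $A$. Passing to Grothendieck groups recovers $K_0(\CC(\T,A))\cong K_0(A)\oplus K_1(A)$, and the composite $V(\CC(\T,A))\to V(A)$ --- which is exactly evaluation at the basepoint, restricted to projections --- has Grothendieck kernel $K_1(A)$ and image the cone $K_0(A)^+$.

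With these computations in hand, part (i) is assembled as a chain of functorial constructions on the category $\mathrm{Cu}$: from $S:=\Cu(\CC(\T,A))$ the structural theorem recovers the pair $\big(V(\CC(\T,A)),\Lsc(\T,\Cu(A))\big)$ together with the structure map $\phi$ between them; from $\Lsc(\T,\Cu(A))$ one recovers $\Cu(A)$ (its constant subsemigroup, equivalently its stalk, as exhibited by the structural theorem), and then, via strict comparison, the ordered group $K_0(A)$ with its unit $[1_A]$ and the simplex $T(A)$ with the pairing $r$ --- the latter read off from the functionals on $S$, equivalently from the soft part $\mathrm{LAff}_{++}(T(A))$ of $\Cu(A)$ --- while $K_1(A)$ is recovered as the kernel of the Grothendieck map associated to $\phi$ restricted to the compact elements. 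Since each step is a functor and the distinguished element (the class of $1_{\CC(\T,A)}$) is transported canonically, their composite is the required functor with value $\mathrm{Ell}(A)$. For part (ii) one runs the same identifications in reverse: define $\mathrm{Ell}(\cdot)^{\mathrm{Cu}}$ to be the $\mathrm{Cu}$-valued functor that assembles from $\mathrm{Ell}(A)$ the semigroup $\big(V(A)\oplus K_1(A)\big)\sqcup\Lsc\big(\T,\Cu(A)\big)$ with the order and $\mathrm{Cu}$-structure prescribed by the structural theorem --- manifestly a functor of $\mathrm{Ell}(A)$, since $\Cu(A)$ is --- and observe that the structural theorem furnishes an isomorphism $\Cu(\CC(\T,\cdot))\cong\mathrm{Ell}(\cdot)^{\mathrm{Cu}}$; naturality is checked by following the two natural transformations driving the construction, evaluation at the basepoint and the monodromy class, through all the identifications.

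I expect the real work to be naturality and bookkeeping rather than any single estimate. One must verify that the splitting of $V(\CC(\T,A))$ into its $K_0$- and $K_1$-parts, the retraction $\Cu(\CC(\T,\cdot))\twoheadrightarrow\Cu(\cdot)$, and the structural isomorphism are mutually compatible and all natural in $A$; and one must invoke the $\mathrm{LAff}_{++}(T(A))$-description of the soft part of $\Cu(A)$ only where strict comparison --- hence $\mathcal Z$-stability, not merely stable rank one --- is available. A secondary subtlety is the exact shape of $V(\CC(\T,A))$: since $\CC(\T,A)$ itself need not have stable rank one one cannot expect cancellation there, and the "$\{0\}\sqcup(\cdot\setminus\{0\})\times K_1$" correction has to be carried consistently through the recovery of $K_0(A)^+$ and its order unit.
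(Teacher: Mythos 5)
Your overall architecture is the same as the paper's: reduce to the structural isomorphism $\Cu(\CC(\T,A))\cong \V(\CC(\T,A))\sqcup\Lsc_{\mathrm{nc}}(\T,\Cu(A))$ (Theorem \ref{thm:circle}), identify $\Cu(A)$ with $\K_0(A)^+\sqcup\mathrm{LAff}(\mathrm{T}(A))^{++}$ via strict comparison and \cite{bpt}, identify the projection part with $\{0\}\sqcup(\V(A)^*\times\K_1(A))$, and then package the whole thing as a functor $F$ on the Elliott category together with a quasi-inverse. The one place where you genuinely diverge is the computation of $\V(\CC(\T,A))$: the paper gets it from $\mathcal Z$-stability, via cancellation of full projections in the $\mathcal Z$-stable algebra $\CC(\T,A)$ (\cite[Theorem 1]{jiang}, following \cite{Tikuisis}), whereas you propose a direct monodromy classification claimed to hold for \emph{every} simple $A$ of stable rank one. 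That stronger claim is exactly where your justification is too thin. Well-definedness and additivity of the pair $([p(1)],\mathrm{mon}(p))$ are indeed routine, but \emph{completeness} (injectivity on Murray--von Neumann classes) is not a consequence of ``cancellation of projections over $A$'': given $p(1)=q(1)$ and equal monodromy, the unitary $W(t)=v_tu_t^*$ conjugating $p$ to $q$ satisfies $W(0)=1$ but $W(1)=v_1u_1^*\neq 1$, and correcting it to a genuine loop requires a homotopy argument inside the commutant of $p(1)$ in (the multiplier algebra of) $A\otimes\mathcal K$, i.e.\ precisely a cancellation/lifting statement in $\CC(\T,A)$ -- the algebra you yourself note need not have stable rank one. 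As written, ``routine'' papers over the only nontrivial step. Since $\mathcal Z$-stability is part of the hypotheses anyway, the clean fix is to do what the paper does and invoke Jiang's cancellation theorem rather than prove the sharper stable-rank-one statement.

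Two smaller points of bookkeeping that you should not gloss over. First, the function component of the decomposition is $\Lsc_{\mathrm{nc}}(\T,\Cu(A))$, the \emph{non-compact} functions, not all of $\Lsc(\T,\Cu(A))$; writing $(\V(A)\oplus\K_1(A))\sqcup\Lsc(\T,\Cu(A))$ double-counts the compact elements, and the fact that $\Lsc_{\mathrm{nc}}$ is closed under addition and that the mixed order is transitive is genuine content (Lemma \ref{lem:nc} and the discussion preceding Theorem \ref{thm:circle}). Second, for the natural equivalence in (ii) the paper does not literally ``extract'' $\Cu(A)$, $\K_1(A)$ and $\mathrm{T}(A)$ from the abstract semigroup $\Cu(\CC(\T,A))$; it shows the functor $F$ is full, faithful and dense onto its image (faithfulness needing the recovery of $\gamma$ from $\gamma^*$ via the state-space duality, and fullness needing that morphisms preserve non-compactness of functions), and obtains $G$ by abstract nonsense. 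Your more constructive description of part (i) is compatible with this, but the verifications you defer to ``naturality and bookkeeping'' are where the paper actually spends its effort in Section 4.
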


Since the Cuntz semigroup is a natural carrier of the ideal
structure of the algebra, it is plausible to expect that the object
$\Cu(\CC(\T,\cdot))$ may be helpful in the classification of
non-simple algebras.

The natural transformation that yields the equivalence of functors
in the theorem above is described in Section 4, and is based on
describing the Cuntz semigroup of $\CC(\T,A)$ for any simple,
separable, unital C$^*$-algebra of stable rank one. This is carried
out in Sections 2 and 3, and is done in terms of the Murray-von
Neumann semigroup of projections of $\CC(\T,A)$ together with a the
subsemigroup of the so-called non-compact lower semicontinuous
functions with values in $\Cu(A)$. Some of the methods used are
similar to the ones in \cite{aps}.

%%%%%%%%% Notation and preliminaries::
\section{Notation and Preliminaries}

We briefly recall the construction of the Cuntz semigroup and the
main technical aspects that we shall be using throughout the paper.
As a blanket assumption, $A$ will be a separable C$^*$-algebra.

Given positive elements $a$, $b$ in $A$, we say that $a$ is
\emph{Cuntz subequivalent to} $b$, in symbols $a\precsim b$, if
there is a sequence $(x_n)$ in $A$ such that $x_nbx_n^*\to a$ in
norm. The antisymmetrization $\sim $ of the relation $\precsim$ is
referred to as \emph{Cuntz equivalence}.

The \emph{Cuntz semigroup} of $A$ is defined as
\[
\Cu(A)=(A\otimes\mathcal K)_+/\sim\,.
\]
Denote the class of a positive element $a$ by $[a]$. Then $\Cu(A)$
is ordered by $[a]\leq [b]$ if $a\precsim b$, and it becomes an
abelian semigroup with addition given by
$[a]+[b]=[\left(\begin{smallmatrix} a & 0 \\ 0 & b
\end{smallmatrix}\right)]$.

As it was proved in \cite{CEI}, there exists a category of ordered semigroups, termed Cu,
with an enriched structure, such that the assignment $A\to\Cu(A)$
defines a sequentially continuous functor. We define this category
below.

In an ordered semigroup $S$, we say that $x$ is \emph{compactly
contained} in $y$ if, whenever there is an increasing sequence
$(z_n)$ with $y\leq\sup z_n$, there is $m$ such that $x\leq z_m$.
This is denoted by $x\ll y$ (see \cite{scottetal}). If $x\ll x$, we
say that $x$ is \emph{compact}. An increasing sequence $(x_n)$ is
termed \emph{rapidly increasing} provided that $x_n\ll x_{n+1}$ for
every $n$.

Define Cu to be the category whose objects are positively ordered
(abelian) semigroups for which: (i) every increasing sequence has a
supremum; (ii) every element is a the supremum of a rapidly
increasing sequence; and (iii) suprema and $\ll$ are compatible with
addition. Maps in Cu will be those semigroup maps that preserve
addition, order, suprema, and $\ll$. We shall be using repeatedly
the fact that, for any positive element $a$,
\[
[a]=\sup\limits_{n\to\infty} [(a-1/n)_+]\,,
\]
as follows from \cite[Proposition 2.4]{Rorfunct} (see also
\cite{KirchRor}). We shall be frequently using that, if $a$ and $b$
are positive elements and  $\|a-b\|<\epsilon$, then there is a
contraction $c$ in $A$ such that $(a-\epsilon)_+=cbc^*$, so in
particular $(a-\epsilon)_+\precsim b$ (see \cite[Lemma
2.2]{Kirchberg-Rordam} and also \cite[Proposition 2.2]{Rorfunct}).

For a compact space $X$ and a semigroup $S$ in the category Cu, we
shall use $\Lsc(X,S)$ to denote the ordered semigroup of all lower
semicontinuous functions from $f\colon X\to S$, with pointwise order
and operation. (Here, $f$ is lower semicontinuous if, for any $x\in
S$, the set $\{t\in X\mid x\ll f(t)\}$ is open in $X$.)

If $A$ is a C$^*$-algebra and $X$ is a one dimensional compact
Hausdorff topological space, then there is a natural map:
\[\begin{array}{ccc} \alpha\colon \Cu(\CC(X,A)) & \longrightarrow &
\Lsc(X,\Cu(A))
\\  \mbox{} x & \longmapsto & \mbox{} \hat{x}
\end{array} \]
where, if $x=[f]$, then $\hat x(t)=[f(t)]$. It is proved in
\cite[Theorem 5.15]{aps} that $\Lsc(X,\Cu(A))$ equipped with the
point-wise order and addition is a semigroup in $\Cu$, and that
$\alpha$ is a well defined map in $\Cu$  which is an order embedding
in case $A$ has stable rank one and  $K_1(I)=0$ for all ideals of
$A$. Furthermore, $\alpha$ is surjective provided it is an order
embedding (and thus an order isomorphism).

%%%%%%% C([0,1],A)

\section{The Cuntz semigroup of $\CC([0,1],A)$ for a simple algebra $A$}

In this section, we prove that if $A$ is a simple C$^*$-algebra with
stable rank one, then the Cuntz semigroup of $\CC([0,1],A)$ is
order-isomorphic to $\Lsc([0,1],\Cu(A))$, thus obtaining the same
result as in \cite[Theorem 2.1]{aps} for a simple algebra, but
without requiring that $\K_1(A)=0$. The key point in the argument is
based on the fact that, for certain continuous fields of
C$^*$-algebras, unitaries from fibres can be lifted to unitaries in
the algebra.

%%%% Lifting unitaries

\begin{lemma}\label{juntarunitaris}
 Let $A$ be a unital continuous field of C$^*$-algebras over $X=[0,1]$ and let $u,v\in \mathrm{U}(A)$. If $u(t_0)\sim_h v(t_0)$ for some $t_0\in (0,1)$, then
there exists $w\in \mathrm{U}(A)$ such that $w(0)=u(0)$ and
$w(1)=v(1)$.
\end{lemma}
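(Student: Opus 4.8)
The plan is to build $w$ by concatenating three unitary paths along the parameter interval $[0,1]$: one from $u(0)$ (thought of as a constant near $0$) to $u(t_0)$ using $u$ itself, one homotopy from $u(t_0)$ to $v(t_0)$ concentrated in a small neighbourhood of $t_0$, and one from $v(t_0)$ to $v(1)$ using $v$. More precisely, I would fix $0<a<t_0<b<1$ and first use the continuous field structure to construct, for any unitary $s\in\mathrm{U}(A)$ and any $c\in(0,1)$, a unitary that agrees with $s$ on a neighbourhood of $[c,1]$ (or $[0,c]$) and is suitably ``frozen'' on the complementary end; the key tool here is that in a unital continuous field over $[0,1]$ one can perturb a unitary on a closed subinterval while keeping it fixed outside a slightly larger interval, because $\mathrm{U}(A)$ surjects onto $\mathrm{U}(A_{[c,d]})$ for closed subintervals and the obstruction to lifting lives in $\K_1$ of the ideal of functions vanishing at the endpoints, which is trivial for an interval. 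So I can replace $u$ by a unitary $u'$ with $u'=u$ on $[0,a]$, $u'(t)=u(a)$ for $t\ge a$... wait, that is not quite a unitary of the field unless $u(a)$ extends constantly, which it does. Similarly produce $v'$ equal to $v$ on $[b,1]$ and constantly $v(b)$ on $[0,b]$.

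The genuine content is then the middle step. We have $u'(t_0)=u(a)$, $v'(t_0)=v(b)$, and we want to connect them inside a small parameter-neighbourhood of $t_0$. This is where the hypothesis $u(t_0)\sim_h v(t_0)$ enters: since $u(a)\sim_h u(t_0)$ via the restriction of $u$ to $[a,t_0]$ (a path of unitaries in a single fibre? no --- $u(a)$ and $u(t_0)$ live in \emph{different} fibres). Let me restructure. The correct statement is that after the first step $u'$ takes the value $u(t_0)$ on the whole interval $[a,b]$ (not $u(a)$); that is, I modify $u$ so that it equals $u$ on $[0,a]$ and is the constant $u(t_0)$ on $[t_0,1]$, interpolating on $[a,t_0]$ by the original $u$ reparametrized --- here I use that the field is trivial enough locally, or simply that $t\mapsto u(\min(t/t_0\cdot a,\,\dots))$... again constancy of a fibre element across the field is what one needs, and that is automatic in a unital continuous field (the constant section is continuous). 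Symmetrically, $v'$ equals the constant $v(t_0)$ on $[0,t_0]$ and equals $v$ on $[b,1]$. On the overlap $[a,b]$, $u'$ ends at constant $u(t_0)$ and $v'$ starts at constant $v(t_0)$; a homotopy $h\colon[0,1]\to\mathrm{U}(A)$ with $h_0=u(t_0)$, $h_1=v(t_0)$ (constant sections, since it is a homotopy in the fibre $A_{t_0}$ --- but we need it in $A$, so take the \emph{constant} section of the fibre homotopy, which exists because the field is unital and locally trivial near $t_0$, or because a homotopy of unitaries in $A_{t_0}$ lifts to a homotopy of constant sections) lets us insert, on a tiny subinterval $[t_0-\epsilon,t_0+\epsilon]\subset(a,b)$, a smooth passage from the constant $u(t_0)$ to the constant $v(t_0)$. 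Splicing $u'$ on $[0,t_0-\epsilon]$, this homotopy strip on $[t_0-\epsilon,t_0+\epsilon]$, and $v'$ on $[t_0+\epsilon,1]$ gives a continuous unitary section $w$ of the field with $w(0)=u(0)$ and $w(1)=v(1)$, as desired.

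The main obstacle, and the point deserving care, is the lifting claim used in the first and second steps: that a unitary prescribed on a closed subinterval extends to a global unitary of the field, and that a homotopy of unitaries in a single fibre $A_{t_0}$ can be realized by constant sections over a neighbourhood. For the first, one argues that the quotient map $A\to A_{[c,d]}$ (restriction to a closed subinterval) is surjective with kernel an ideal that is semiprojective enough / has vanishing $\K_1$, so surjectivity of the map on unitaries holds when the relevant $\K_1$ obstruction vanishes --- and for an interval this is where stable rank one of the fibres, or rather the one-dimensionality of $[0,1]$, does the work, exactly paralleling the role of the hypothesis ``$\K_1(I)=0$ for all ideals'' that the excerpt removes in the simple-algebra setting via this very lemma. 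For the second, continuity of constant sections is immediate in any continuous field; the only subtlety is that we need the homotopy to move \emph{within} a prescribed small open set of parameter values, which is just a reparametrization. I would therefore expect the proof to be short once the restriction-surjectivity lemma for unitaries over subintervals is isolated and invoked.
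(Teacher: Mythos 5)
Your overall plan --- splice $u$ near $0$, $v$ near $1$, and a homotopy from $u(t_0)$ to $v(t_0)$ in between --- is the right instinct, but the execution has a genuine gap: it repeatedly relies on ``constant sections'' through arbitrary fibre elements (e.g.\ $u'(t)=u(t_0)$ for $t\ge t_0$, and the homotopy realized by ``constant sections of the fibre homotopy''), and these do not exist in a general continuous field. An element of the fibre $A(t_0)$ is not an element of $A(t)$ for $t\neq t_0$, so $t\mapsto u(t_0)$ is not even a well-defined section; the only canonical constant sections of a unital continuous field are the scalar multiples of the unit. Your fallback, that ``the field is locally trivial near $t_0$'', is not a hypothesis and fails for general continuous fields --- and the lemma is applied in the paper precisely to non-trivial fields, namely $D_B$ for a hereditary subalgebra $B$ of $\CC(X,A)$ and its restrictions to subintervals. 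Likewise, the restriction-surjectivity for unitaries that you invoke, via $\K_1$-obstructions and semiprojectivity, is both heavier than what is needed and not actually established by your sketch.

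The paper's proof avoids all of this with one elementary lifting fact plus a diagonal trick. Since $u(t_0)\sim_h v(t_0)$, the unitary $(vu^*)(t_0)$ lies in $\mathrm{U}_0(A(t_0))$, and for any surjective unital $*$-homomorphism (here, evaluation at $t_0$) the induced map on $\mathrm{U}_0$ is surjective --- no $\K_1$ hypothesis, just lifting self-adjoint logarithms. Hence there is $\tilde w\in\mathrm{U}_0(A)$ with $\tilde w(t_0)=(vu^*)(t_0)$, and a continuous path $(w_s)$ in $\mathrm{U}_0(A)$ from $w_0=1_A$ to $w_{t_0}=\tilde w$. Each $w_s$ is a \emph{global} unitary section, so one may set $w(s)=w_s(s)u(s)$ for $s\le t_0$ and $w(s)=v(s)$ for $s>t_0$: the two definitions agree at $s=t_0$ because $\tilde w(t_0)u(t_0)=v(t_0)$, one has $w(0)=u(0)$ and $w(1)=v(1)$, and continuity of the section follows from the uniform continuity of $s\mapsto w_s$ together with local approximation by the elements $w_tu\in A$. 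Evaluating the homotopy parameter at the field parameter is exactly the device that replaces your nonexistent constant sections; with it, the three-piece splicing and the subinterval lifting lemma become unnecessary.
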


\begin{proof}
 Since $u(t_0)\sim_h v(t_0)$, we have $(vu^*)(t_0)\sim_h 1_{A(t_0)}$. Therefore, there exists a unitary $\tilde w\in \mathrm{U}_0(A)$,
such that $\tilde w(t_0)=vu^*(t_0)$. Consider a continuous path
$w_s$ of unitaries in $\mathrm{U}_0(A)$ such that $w_0=1_{A}$ and
$w_{t_0}=\tilde w$. Let us define the following element $w\in
\prod_{t\in [0,1]}A(t)$ given by
\[ w(s):= \left\{\begin{array}{lr} w_s(s)u(s) & \text { if } s\leq t_0. \\ v(s) & \text{otherwise} \end{array} \right.\]

Clearly $w(0)=u(0)$ and $w(1)=v(1)$. Since $A$ is a continuous field
of C$^*$-algebras, to prove $w\in A$ it is enough to find, for each
$t\in [0,1]$ and $\epsilon>0$, a neighborhood $V_t$ of $t$ and an
element $z\in A$ such that $\|w(s)-z(s)\|<\epsilon$ for all $s\in
V$.

This is obvious if $t\in (t_0,1]$. If $t\in [0,t_0)$, and
$\epsilon>0$, there exists a neighborhood $V_t$ such that for all
$s,s'\in V_t$, $\| w_{s}-w_{s'}\|<\epsilon$ (since $w_s$ is a
continuous path). Hence, considering the element $z=w_tu\in A$, we
have, for all $s\in V_t$,
\[\| w(s)-(w_tu)(s)\|=\|w_s(s)u(s)-w_t(s)u(s)\|\leq \| w_s(s)-w_t(s)\|\cdot \|u(s)\|<\epsilon. \]

Now for $t=t_0$, since $\|(w_{t_{0}}u)(t_0)-v(t_0)\|=0$ and by the
continuity of the norm in $A$, there exists a neighborhood $V_{t_0}$
such that $\|(w_{t_0}u)(s)-v(s)\|<\epsilon$ for all $s\in V_{t_0}$,
and furthermore we can choose $V_{t_0}$ such that
$\|w_{s}-w_{t_0}\|<\epsilon$. Now, with a similar argument as above,
we are done taking $z=w_{t_0}u$.
\end{proof}

%%%%%%%%%%%%%%%%

Given a $C^*$-algebra $A$ and a hereditary subalgebra $B\subseteq
\CC(X,A)$, $B$ becomes a continuous field of C$^*$-algebras over $X$
whose fibres $B_x$ can be identified with hereditary subalgebras of
$A$. If $A$ is simple, then for all $x\in X$ such that $B_x\neq 0$,
the inclusion $i_x\colon B_x\to A$ induces an isomorphism
\[ \begin{array}{rccc}
    (i_{x})_*\colon & \K_1(B_{x}) & \rightarrow & \K_1(A).
%                    & [v]_{B_x} & \mapsto & [v]_{A}
   \end{array}
\]

If $A$ has stable rank one,  then
$\K_1(A)=\mathrm{U}(A^\sim)/\mathrm{U}_0(A^\sim)$ and elements can
be identified with connected components of unitaries in
$\mathrm{U}(A^\sim)$, which we denote by $[v]_A$. Hence, for all
$x$, $B_x$ will also have stable rank one and
$(i_x)_*([v]_{B_x})=[i_x^\sim(v)]_{A}=[v]_A$ where $i_x^\sim\colon
B_x^\sim\to A^\sim$ denotes tha natural extension to the
unitizations.

Let $D_B=B+\CC(X)\cdot 1_{\CC(X,A^\sim)}\subseteq \CC(X,A^\sim)$.
Then $D_B$ is a unital continuous field of C$^*$-algebras whose
fibres $D_B(x)\cong B_x+\C\cdot 1_{\CC(X,A^\sim)}(x)$. Assuming $A$
is stable, we have $1_{A^\sim}=1_{\CC(X,A^\sim)}(x)\not\in
B_x\subseteq A$, hence $D_B(x)\cong B_x^\sim$. Observe furthermore
that the following diagram commutes:
%\[
% \xymatrix{ D_B=B+\CC(X)\cdot 1_{\CC(X,A^\sim)}\ar@{->>}_{\pi_x}[d]\ar@{^{(}->}[r] & \CC(X,A) + \CC(X)\cdot1_{\CC(X,A^\sim)}\ar@{->>}^{\pi_x}[d] \\
%D_B(x)=B_x+\C\cdot 1_{\CC(X,A^\sim)}(x)\ar@{^{(}->}^{i_x^\sim}[r] &
%A + \C\cdot 1_{\CC(X,A^\sim)}(x)=A^\sim}
%\]
\[
\xymatrix{ D_B=B+\CC(X)\cdot 1_{\CC(X,A^\sim)}\ar@{->>}_{\pi_x}[d]\ar@{^{(}->}[r] & \CC(X,A) + \CC(X)\cdot1_{\CC(X,A^\sim)}\ar@{->>}^{\pi_x}[d] \\
D_B(x)=B_x+\C\cdot 1_{\CC(X,A^\sim)}(x)\ar@{^{(}->}^-{i_x^\sim}[r] &
A + \C\cdot 1_{\CC(X,A^\sim)}(x)=A^\sim}
\]
Hence we will assume, $D_B(x)=B_x^\sim\subseteq A^\sim
=\CC(X,A^\sim)(x)$.

\begin{proposition}\label{pujarunitari}
 Let $A$ be a simple C$^*$-algebra with stable rank one and let $X$ be a finite graph. Suppose $B$ is a hereditary subalgebra of
$\CC(X,A)$ such that $B_x\neq 0$ for all $x\in X$. Let $(i_x)_*$  %$ \colon \K_1(B_x)\to \K_1(A)$
denote the induced isomorphisms. Let  $x_0,\dots,x_n \in X$ and
$u_i\in \mathrm{U}(B_{x_i}^\sim)$ for $i=0,\dots,n$. If
$(i_{x_k})_*([u_k])=(i_{x_l})_*([u_l])$ for all $k,l$, then there
exists $u\in \mathrm{U}(D_B)$ such that $u(x_i)=u_i$ for
$i=0,\dots,n$.
\end{proposition}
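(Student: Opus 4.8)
The plan is to cut $X$ along its vertices, solve the problem on each edge (a copy of $[0,1]$) by means of Lemma~\ref{juntarunitaris}, and glue the pieces. We may assume $X$ is connected: otherwise we treat each component separately, putting $u=1$ on a component containing no $x_i$. By finitely many edge subdivisions we may also assume that every $x_i$ is a vertex of $X$; this changes neither the space $X$, nor $B$, nor the hypothesis $B_x\neq0$. Put $\kappa:=(i_{x_j})_*([u_j])\in\K_1(A)$ for the common value. For a vertex $v$ of $X$ that is not among the $x_i$, the algebra $B_v$ is a nonzero hereditary subalgebra of $A$, so it has stable rank one, and $(i_v)_*\colon\K_1(B_v)\to\K_1(A)$ is an isomorphism; since in a stable rank one algebra every $\K_1$-class is the class of a unitary in the unitization, we may fix $u_v\in\mathrm{U}(B_v^\sim)=\mathrm{U}(D_B(v))$ with $(i_v)_*([u_v])=\kappa$. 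After this every vertex $v$ of $X$ carries a unitary $u_v$ with $(i_v)_*([u_v])=\kappa$.

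The main step is to produce, for each edge $e\cong[0,1]$ with endpoints $v_0,v_1$, a unitary $w_e\in\mathrm{U}(D_{B^e})$ with $w_e(v_0)=u_{v_0}$ and $w_e(v_1)=u_{v_1}$, where $B^e:=B|_e$ and $D_{B^e}:=B^e+\CC(e)\cdot 1$. This relies on a lifting statement, discussed below: \emph{every unitary in a fibre $D_{B^e}(s)$ is the value at $s$ of some unitary of $D_{B^e}$}. Granting it, choose $p,q\in\mathrm{U}(D_{B^e})$ with $p(v_0)=u_{v_0}$ and $q(v_1)=u_{v_1}$. Since $D_{B^e}\subseteq\CC(e,A^\sim)$, the section $p$ is a norm-continuous path of unitaries of $A^\sim$, so its class in $\K_1(A)=\mathrm{U}(A^\sim)/\mathrm{U}_0(A^\sim)$ is locally constant, hence constant on the connected set $e$; using the identity $(i_t)_*([p(t)]_{B_t})=[p(t)]_A$ noted above and that $(i_t)_*$ is an isomorphism, we get $(i_t)_*([p(t)]_{B_t})=\kappa$ for every $t\in e$, and likewise for $q$. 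In particular, at the midpoint $t_0$ of $e$ we have $[p(t_0)]_{B_{t_0}}=[q(t_0)]_{B_{t_0}}$ in $\K_1(B_{t_0})$, and as $B_{t_0}$ has stable rank one this forces $p(t_0)\sim_h q(t_0)$ in $\mathrm{U}(B_{t_0}^\sim)=\mathrm{U}(D_{B^e}(t_0))$. Lemma~\ref{juntarunitaris}, applied to the unital continuous field $D_{B^e}$ over $[0,1]$, the unitaries $p,q$, and the interior point $t_0$, yields $w_e\in\mathrm{U}(D_{B^e})$ with $w_e(v_0)=p(v_0)=u_{v_0}$ and $w_e(v_1)=q(v_1)=u_{v_1}$. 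The $w_e$ agree at every vertex (the common value being $u_v$), so they patch to a section $u$ of the field $D_B$ over $X$; being fibrewise unitary, $u\in\mathrm{U}(D_B)$, and $u(x_i)=u_{x_i}=u_i$.

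The hard part is the lifting statement, and it is precisely here that the hypothesis $B_x\neq0$ for all $x$ is used: for $X=[0,1]$ with $B_0=0$ one gets $\K_1(D_B)=0$, while interior fibres have $\K_1\cong\K_1(A)$, so unitaries of nontrivial $\K_1$-class there do not lift. I would establish it by proving that evaluation $\K_1(D_{B^e})\to\K_1(D_{B^e}(s))=\K_1(B_s)$ is an isomorphism for each $s\in e$. Since $A$ is simple and separable, every nonzero $B_s$ is a full hereditary subalgebra of $A$, whence $B^e$ is a full hereditary subalgebra of $\CC(e,A)$; the $\CC(e)$-equivariant form of Brown's stable isomorphism theorem then gives a $\CC(e)$-linear isomorphism $B^e\otimes\mathcal K\cong\CC(e,A)\otimes\mathcal K$, under which the evaluation at $s$ on $\K_1$ becomes the evaluation $\K_1(\CC(e,A))\to\K_1(A)$, an isomorphism because $e$ is contractible; moreover the inclusion $B^e\hookrightarrow D_{B^e}$ induces an isomorphism on $\K_1$, the connecting map $\K_0(\CC(e))\to\K_1(B^e)$ in the extension $0\to B^e\to D_{B^e}\to\CC(e)\to0$ being zero since the class of $1$ lifts to the projection $1\in D_{B^e}$. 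Granting the isomorphism, a unitary $u\in\mathrm{U}(D_{B^e}(s))$ is lifted thus: pick $w\in\mathrm{U}(D_{B^e})$ mapping to $[u]$; then $w(s)u^{-1}\in\mathrm{U}_0(D_{B^e}(s))$ (stable rank one again), and a lift $\xi\in\mathrm{U}_0(D_{B^e})$ of it — obtained, as in the proof of Lemma~\ref{juntarunitaris}, by lifting the self-adjoint elements in an exponential factorization — satisfies $(\xi^{-1}w)(s)=u$.
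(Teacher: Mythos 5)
Your global structure (triangulate so the $x_i$ are vertices, fill in unitaries with the correct $\K_1$-class at the remaining vertices, solve the problem edge by edge and glue) is the same as the paper's, and the reduction and gluing steps are fine. The problem is the ``lifting statement'' on which each edge rests. Your $\K$-theoretic computation --- fullness of $B^e$, the $\CC(e)$-equivariant Brown theorem, and the six-term sequence for $0\to B^e\to D_{B^e}\to\CC(e)\to 0$ --- does plausibly give an isomorphism $\K_1(D_{B^e})\to\K_1(B_s)$, but $\K_1$ is computed with matrices over $D_{B^e}$. The step ``pick $w\in \mathrm{U}(D_{B^e})$ mapping to $[u]$'' silently assumes that every class in $\K_1(D_{B^e})$ is represented by a unitary in $D_{B^e}$ itself, i.e.\ that $\mathrm{U}(D_{B^e})/\mathrm{U}_0(D_{B^e})\to\K_1(D_{B^e})$ is onto. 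That would follow if $D_{B^e}$ had stable rank one, but stable rank one of $A$ does not pass to $\CC([0,1],A)$ (nor to its hereditary subalgebras or their forced unitizations) in general, and nothing in your argument supplies a substitute. Note that the statement you are trying to establish --- every unitary of a fibre $D_{B^e}(s)$ is the value at $s$ of a unitary of $D_{B^e}$ --- is exactly the one-point case of the proposition (Remark \ref{rem:esa}), so you have reduced the problem to something essentially as hard as the original; the $\K$-theory isomorphism alone does not close this loop.

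The paper avoids the issue by never lifting globally. For each $x$ one only needs a \emph{local} unitary section through $u_x$: take any $a\in D_B$ with $a(x)=u_x$; since $a(x)$ is unitary, $a$ is invertible on a neighbourhood of $x$ and its unitary part $a(a^*a)^{-1/2}$ there is a unitary section of $D_B$ taking the value $u_x$ at $x$. One then covers $[0,1]$ by finitely many such neighbourhoods of multiplicity one, checks that consecutive local sections are homotopic in the fibre over a point of each overlap (this is where the common $\K_1$-class and the isomorphisms $(i_x)_*$ are used), and splices them with Lemma \ref{juntarunitaris} applied to each overlap. If you want to keep your ``one global lift per edge'' architecture, you must either prove $\K_1$-surjectivity at the level of $1\times 1$ unitaries for $D_{B^e}$ (which you have not done and which is not available from the hypotheses by any standard citation), or replace the global lift by the local-lift-and-patch argument --- at which point your proof becomes the paper's.
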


\begin{proof}
Let us view $X$ as a $1$-dimensional simplicial complex where its $0$-skeleton is
$X_0=\{x_0,\dots,x_n,\dots,x_m\}$ (possibly adding
vertices to $X_0$ as new points $x_i$). To define a unitary $u\in
\mathrm{U}(D_B)$, it is enough to define it first in $X_0$ and then
in each of the edges  of the $1$-skeleton, provided the values in
the boundary match the corresponding values in $X_0$. Since $X_0$ is
a finite set of points, $u$ can be easily defined pointwise (see
below) choosing, for all $i=1,\dots,n$, $u(x_i)=u_i$. Therefore, in
order to define $u$ for the 1-skeleton we can reduce to the case
$X=[0,1]$ and $x_0=0, x_1=1$ with unitaries $u_0,u_1$ such that
$[u_1]=(i_{x_1})_*^{-1}(i_{x_0})_*[u_0]$.

Let us choose, for the remaining $x\in (0,1)$, unitaries  $u_x\in
\mathrm{U}(B_x^\sim)$ such that
\[ [u_x]_{B_x}=(i_x)_*^{-1}(i_{x_0})_*[u_0]_{B_{x_0}},\]
and hence such that $[i_{x_0}(u_0)]_A=[i_{x}(u_x)]_A$ which means
that $u_x$ and $u_0$ are connected in $\mathrm{U}(A^\sim)$.

For each $x\in X$ we can find an open neighborhood $V_x$ such that
$u_x=v_x(x)$ for some $v_x\in D_B$ and ${v_x}|_{ {\overline{V_x}}}$
is a unitary. Since $X$ is compact, we can find a finite number of
such neighborhoods $V_{x_0}:=V_0,\dots,V_r:=V_{x_1}$ covering $X$.
Furthermore by restricting the  $V_x$'s to be open intervals we can
assume that the resulting cover has multiplicity 1 and denote
$V_i\cap V_{i+1}=(a_i,b_i)$ for $i=1,\dots, r-1$
($a_i<b_i<a_{i+1}$). For $i=1,\dots,r-1$, let us assume
$V_i=V_{y_i}$ for some $y_i\in X$, $y_0=x_0=0$ and $y_r=x_1=1$.

For each $i=0,\dots,r-1$ choose $z_{i}\in (a_i,b_i)$. Since
$D_B\subseteq \CC(X,A^\sim)$, both $v_{y_i}\mid_{\overline{V_i}}$
and $v_{y_{i+1}}\mid_{\overline{V_{i+1}}}$ are paths of unitaries in
$A^\sim$. Hence in $A^\sim$ we have
\[v_{y_i}(z_{i})\sim_h v_{y_i}(y_i)=u_{y_i}\sim_h u_0 \sim_h u_{y_{i+1}}= v_{y_{i+1}}(y_{i+1})\sim_h v_{y_{i+1}}(z_{i}).\]
This implies
$(i_{z_{i}})_*[v_{y_{i}}(z_{i})]=(i_{z_{i}})_*[v_{y_{i+1}}(z_{i})]$,
but since $(i_{z_{i}})_*$ is an isomorphism, we obtain
$v_{y_{i+1}}(z_{i})\sim_h v_{y_{i}}(z_{i})$ in $B(z_{i})^\sim$. Now,
using Lemma~\ref{juntarunitaris}, we can construct a unitary $w_{i}$
in $D_B(\overline{V_i\cap V_{i+1}})^\sim=D_B([a_{i},b_{i}])^\sim$
such that $w_{i}(a_{i})=v_{y_i}(a_{i})$ and
$w_{i}(b_{i})=v_{y_{i+1}}(b_{i})$.

Therefore, defining $v\in D_B$ as the following element in
$\prod_{x\in X}B_x^\sim$
\[v(x)=\left\{ \begin{array}{lr} v_{y_i}(x) & \text{ if } x\in V_{i}\setminus(V_{i-1}\cup V_{i+1}) \\ w_{i}(x) & \text{ if } x\in V_i\cap V_{i+1} \end{array}\right.\]
we obtain an element in $D_B$, which is furthermore a unitary and
$v(0)=u_0, v(1)=u_1$.

\end{proof}

\begin{remark}
\label{rem:esa} Observe that, in the particular case of only one
point $x_0\in X$, the Proposition states that the map
$\mathrm{U}(D_B)\to \mathrm{U}(B_{x_0}^\sim)$ is surjective, and
thus we can lift unitaries from each fibre.
\end{remark}

%%%%%%%%%% Lemes per poder-se reduir a un compacte.

Let $X$ be a locally compact, Hausdorff space. Suppose $A$ is a
continuous field of C$^*$-algebras over $X$ and $a\in A$. We denote
by $\supp(a)=\{x\in X\mid a(x)\neq 0\}$. Observe that, since the
assignment $x\to \|a(x)\|$ is continuous, $\supp(a)$ is an open
subset of $X$. If $Y\subseteq X$ is a closed subset of $X$, and
$a\in A$, then $a|_{ Y}$ denotes the image of $a$ by the projection
$\pi_Y:A\to A(Y)$.

\begin{lemma}\label{lematonto}
Let $A$ be a continuous field of C$^*$-algebras over a space $X$ and
let $a,b\in A_+$.
\begin{enumerate}
 \item If $X=\sqcup_{i=1}^r X_i$ is a finite disjoint union of open sets,
 then $a\precsim b$ if and only if $a|_{X_i} \precsim b|_{X_i}$ for $i=1,\dots,r$.

\item If $b|_{ K}\precsim a|_{ K}$ for some $K$ such that $\supp(b)\subseteq K\subseteq \supp(a)$, then $a\precsim b$.
\end{enumerate}
\end{lemma}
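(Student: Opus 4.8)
For part (1), the forward direction is immediate: if $x_n b x_n^* \to a$ in $\CC(X,A)$ (or the relevant continuous field), then restricting to the clopen piece $X_i$ gives $x_n|_{X_i}\, b|_{X_i}\, (x_n|_{X_i})^* \to a|_{X_i}$, since the restriction map $\pi_{X_i}$ is a $*$-homomorphism and hence norm-decreasing. For the converse, the point is that $X = \sqcup X_i$ with each $X_i$ open (hence clopen, as the union is finite) forces $A \cong \bigoplus_{i=1}^r A(X_i)$ as C$^*$-algebras, the isomorphism being $c \mapsto (c|_{X_1},\dots,c|_{X_r})$. Under this identification, Cuntz subequivalence is computed coordinatewise: if for each $i$ we have a sequence $(x_n^{(i)})$ in $A(X_i)$ with $x_n^{(i)}\, b|_{X_i}\, (x_n^{(i)})^* \to a|_{X_i}$, then assembling $x_n := (x_n^{(1)},\dots,x_n^{(r)})$ gives an element of $A$ with $x_n b x_n^* \to a$. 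The only mild subtlety is making sure the assembled $x_n$ genuinely lies in $A$ rather than some larger product; this is exactly the content of the direct-sum decomposition, so there is nothing to check beyond invoking it.

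For part (2), I would argue as follows. Set $U = \supp(a)$, which is open. Since $\supp(b) \subseteq K \subseteq U$, the element $b$ vanishes off $K$, hence off $U$; more precisely, for $\epsilon > 0$ the element $(b-\epsilon)_+$ is supported on a compact subset of $\{x : \|b(x)\| > \epsilon/2\} \subseteq \supp(b) \subseteq U$ — wait, one must be careful that $\supp(b)$ need not be closed. The cleaner route: approximate $a$ by $(a-\epsilon)_+$, so it suffices to show $(a-\epsilon)_+ \precsim b$ for every $\epsilon > 0$, by the sup formula recalled in Section 1. Now $\supp((a-\epsilon)_+) = \{x : \|a(x)\| > \epsilon\}$ is relatively compact in $U$ and its closure $L_\epsilon$ is a compact subset of $U$. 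The hypothesis gives $b|_K \precsim a|_K$; restricting further, on any compact set $K'$ with $\supp(b) \subseteq K' \subseteq K$ we still have $b|_{K'} \precsim a|_{K'}$, and I want to turn the subequivalence $b|_K \precsim a|_K$ together with the support condition into a global statement. The mechanism is: pick a function $h \in \CC_0(U)$ (extended by zero to all of $X$) with $0 \le h \le 1$ and $h \equiv 1$ on $L_\epsilon$; then $h \cdot a$ makes sense as an element of $A$ (an honest element of the continuous field, since $h$ has compact support inside $U = \supp(a)$, so no discontinuity is introduced at $\partial U$), and $(a-\epsilon)_+ \precsim h\cdot a \sim (ha) $ with $ha$ having support contained in a compact subset of $U$. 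Then I reduce to showing $ha \precsim b$, and since $\supp(ha)$ and $\supp(b)$ both sit inside $K$ (enlarging $K$ if necessary to contain $\supp(ha)$ — or rather choosing $h$ supported in $K$, which is possible when $L_\epsilon \subseteq \mathrm{int}(K)$; if $K$ is merely closed we first shrink to $L_\epsilon \subseteq U$ and use that any compact subset of the open set $U$... here is the crux), the subequivalence $b|_K \precsim a|_K$ lifts: the approximating sequence $x_n \in A(K)$ for $b|_K \precsim a|_K$ can be multiplied by a cutoff and extended by zero to elements of $A$, giving $x_n a x_n^* \to b$ globally because outside $K$ both sides are controlled by the vanishing of $b$. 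I then run the same argument with the roles reversed to get $(a-\epsilon)_+ \precsim b$, and take the supremum over $\epsilon$.

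The main obstacle, and the step deserving the most care, is the passage from the local Cuntz subequivalence $b|_K \precsim a|_K$ on the closed set $K$ to a global subequivalence — specifically, ensuring that the elements implementing the subequivalence on $A(K)$ can be lifted to $A$ (which requires $K$ to be not too wild, e.g. a manifold-with-boundary or at least that $A \to A(K)$ admits suitable approximate lifts) and that the resulting global estimate is not spoiled near $\partial K$. Here the support hypothesis $\supp(b) \subseteq K \subseteq \supp(a)$ is doing the real work: it guarantees $b$ is already negligible where the lift might fail to be accurate, so a standard cutoff-and-extend argument, combined with the approximation $b \approx (b-\delta)_+$ whose support is compactly contained in the interior of $K$, closes the gap. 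I expect the write-up to hinge on choosing the cutoff functions carefully and on the observation that multiplying a positive element by a compactly-supported-in-$U$ scalar function keeps it inside the continuous field $A$.
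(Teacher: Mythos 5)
Your argument for part (1) is fine and is exactly the paper's one-line proof via the identification $A\cong\bigoplus_{i=1}^r A(X_i)$.

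Part (2) has a genuine problem of direction. You set out to prove $(a-\epsilon)_+\precsim b$ for all $\epsilon>0$, i.e.\ $a\precsim b$ as the statement is literally printed. But that conclusion does not follow from the hypothesis and is in fact false in general: with $b=0$ one may take $K=\emptyset$, and the hypothesis holds vacuously while $a\precsim 0$ forces $a=0$; even for $b\neq 0$, nothing in $b|_K\precsim a|_K$ and $K\subseteq\supp(a)$ bounds $a$ by $b$. The conclusion in the statement is a typo for $b\precsim a$ --- this is what the paper's own proof establishes (it ends with $\|b-(\lambda d)a(\lambda d)^*\|<\epsilon$, ``we obtain $b\precsim a$''), and it is the direction needed when the lemma is invoked in Theorem \ref{thm:interval} (there $b=f$, $a=g$, and one wants $f\precsim g$ from $f|_K\precsim g|_K$).

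The irony is that the correct mechanism is already in your write-up: lift the element implementing $b|_K\precsim a|_K$ to some $d\in A$, observe that the estimate $\|b(x)-d(x)a(x)d(x)^*\|<\epsilon$ persists on an open set $U\supseteq K$ by continuity of the norm function of a continuous field, multiply $d$ by a scalar cutoff $\lambda$ equal to $1$ on $K$ and $0$ off $U$, and use $\supp(b)\subseteq K$ to see that the error is controlled off $K$ (where $b$ vanishes). That yields $\|b-(\lambda d)a(\lambda d)^*\|<\epsilon$ globally, hence $b\precsim a$ --- which is the paper's proof, with none of the $(a-\epsilon)_+$ and $L_\epsilon$ apparatus you introduce. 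Your final step, ``run the same argument with the roles reversed to get $(a-\epsilon)_+\precsim b$,'' is where the attempt collapses: there is no hypothesis $a|_K\precsim b|_K$ to reverse, and the support condition $K\subseteq\supp(a)$ gives no control of $a$ by $b$. You should restate the conclusion as $b\precsim a$ and then your cutoff-and-extend argument, suitably cleaned up, is precisely the intended proof.
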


\begin{proof}

(i) Is clear since $A\cong \oplus_{i=1}^r A(X_i)$. Let us prove
(ii). Suppose $b|_{ K}\precsim a|_{ K}$ as in the statement. Given
$\epsilon>0$, we can find $d\in A$ such that
$\|b(x)-d(x)a(x)d^*(x)\|<\epsilon$ for all $x\in K$, but since $A$
is a continuous field of C$^*$-algebras, this is valid in an open
set $K\subseteq U$,
\begin{equation}\label{aaK} ||b(x)-dad^*(x)||<\epsilon \quad \text{ for all }x\in U.\end{equation}

Now, since $K\cap U^c=\emptyset$ we can consider a continuous
function $\lambda\colon X\to [0,1]$ such that $\lambda|_{ K}=1$ and
$\lambda|_{ U^c}=0$. If $x\in \supp(b)\subseteq K\subseteq U$, then
\[\|b(x)-(\lambda d)a(\lambda
d)^*(x))\|=\|b(x)-d(x)a(x)d^*(x)\|<\epsilon,\] by \eqref{aaK}, and
if $x\not\in U$ then $||b(x)-(\lambda d)a (\lambda d)^*(x)||=0$.
Finally, if $x\in U\setminus \supp(b)$, then $b(x)=0$, and
\[||b(x)-(\lambda d)a(\lambda d)^*(x)||=||\lambda^2b(x)-(\lambda
d)a(\lambda d)^*(x)||=|\lambda^2(x)|\cdot \|b(x)-dad^*(x)\|
<\epsilon,\] again by \eqref{aaK}. Hence, since $||b-(\lambda
d)a(\lambda d)^*||=\sup_{x\in X}||b(x)-(\lambda d)a(\lambda
d)^*(x)||<\epsilon$, we obtain $b\precsim  a$.

\end{proof}

Recall that if $X$ is a locally compact Hausdorff topological space,
then the set $\mathcal O(X)$ consisting of open sets ordered by
inclusion is a continuous lattice. In the case $X$ is second
countable, we have that $U\ll V$ whenever there exists a compact set
$K$ such that $U\subseteq K\subseteq V$ (the countability condition
is needed since our definition of compact containment is only for
increasing sequences, and not arbitrary nets). In fact, $\mathcal
O(X)$ with union as addition is a semigroup in Cu, which can be
described as $\Lsc(X,\{0,\infty\})$ through the assignment $f\to
\supp(f)$ (since $\infty$ is a compact element in $\{0,\infty\}$ and
thus $\supp(f)=f^{-1}(\{\infty\})$ is an open set, and, by the same
argument, the characteristic function (in $\{0,\infty\}$) of any
open set, is lower semicontinuous). The following Lemma illustrates
the relation of Cuntz order in a continuous field of C$^*$-algebras
over $X$ with the ordered structure of $\mathcal O(X)$.

% Now, observe that given $S\in \Cu$, the map $\gamma\colon S\to
% \{0,\infty\}$ defined by $\gamma(0)=0$ and $\gamma(s)=\infty$
% otherwise, is a map in $\Cu$. Thus, invoking \cite[Proposition 5.16]{aps}, we obtain a map,
% \[\begin{array}{ccc} \Lsc(X,S)& \longrightarrow & \Lsc(X,\{0,\infty\})\cong \mathcal O(X) \\  f & \longmapsto & \supp(f).\end{array} \]
%
% The following Lemma, in the case of $A=C(X,B)$, is a consequence of the fact that this map is in Cu, but we include a straightforward proof for completeness.

\begin{lemma}\label{suportwaybelow}
Let $A$ be a (stable) continuous field of C$^*$-algebras over a
compact Hausdorff space $X$ and $a,b\in A_+$ such that $[b]\leq
[a]$. Then $\supp(b)\subseteq \supp(a)$ and, if $[b]\ll [a]$ we have
$\supp(b)\ll \supp(a)$.
\end{lemma}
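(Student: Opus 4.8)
The plan is to establish the two assertions separately, both by chasing the supports through the defining approximations of the Cuntz relation. For the first inclusion, suppose $[b]\le [a]$, i.e.\ $b\precsim a$, so there is a sequence $(x_n)$ in $A$ with $x_n a x_n^*\to b$ in norm. Fixing $t\in\supp(b)$, I would evaluate at the fibre: the evaluation map $\pi_t\colon A\to A(t)$ is a $^*$-homomorphism, hence norm-contractive, so $x_n(t)a(t)x_n(t)^*\to b(t)$, which shows $b(t)\precsim a(t)$ in $A(t)$. Since $b(t)\neq 0$ forces $a(t)\neq 0$ (as $0$ is the least element of $\Cu(A(t))$ and $[b(t)]\le[a(t)]$), we get $t\in\supp(a)$. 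This gives $\supp(b)\subseteq\supp(a)$.

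For the second assertion, assume $[b]\ll[a]$. Using the standard fact recalled from \cite{Rorfunct}, write $[a]=\sup_n [(a-1/n)_+]$. The functions $t\mapsto \|a(t)\|$ are continuous, and $\supp\big((a-1/n)_+\big)=\{t\in X\mid \|a(t)\|>1/n\}$; call this open set $U_n$. These $U_n$ form an increasing sequence of open subsets of $X$ with $\bigcup_n U_n=\{t\mid \|a(t)\|>0\}=\supp(a)$, and moreover $\overline{U_n}\subseteq\{t\mid\|a(t)\|\ge 1/n\}\subseteq U_{n+1}$, so this is in fact a \emph{rapidly increasing} sequence in the lattice $\mathcal O(X)$ (using that a compact $X$ is in particular second countable in our setting, so compact containment in $\mathcal O(X)$ is detected by an intermediate closed set). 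By the first part applied to $b$ and $(a-1/n)_+$, we have $\supp(b)\subseteq\supp\big((a-1/n)_+\big)=U_n$ as soon as $[b]\le[(a-1/n)_+]$; and since $[b]\ll[a]=\sup_n[(a-1/n)_+]$, there is some $m$ with $[b]\le[(a-1/m)_+]$. Hence $\supp(b)\subseteq U_m$. But $\overline{U_m}$ is a closed (hence compact) subset of $X$ sitting inside $U_{m+1}\subseteq\supp(a)$, so $\supp(b)\subseteq U_m\subseteq\overline{U_m}\subseteq\supp(a)$ witnesses $\supp(b)\ll\supp(a)$ in $\mathcal O(X)$, as desired.

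The main technical point to be careful with is the passage to compact containment in $\mathcal O(X)$: one needs that the order relation $[b]\ll[a]$ in $\Cu(A)$ really does deliver a single stage $(a-1/m)_+$ that dominates $b$, which is exactly the content of compact containment applied to the increasing sequence $\big([(a-1/n)_+]\big)_n$ with supremum $[a]$. I expect the only genuine obstacle is bookkeeping the identification of $\supp\big((a-1/n)_+\big)$ with the sublevel-type open set $U_n$ and verifying $\overline{U_n}\subseteq U_{n+1}$; both follow from continuity of $t\mapsto\|a(t)\|$ for a continuous field, which is recalled just before the statement. Everything else is a direct fibrewise application of the definition of $\precsim$.
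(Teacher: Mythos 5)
Your proof is correct and follows essentially the same route as the paper: both write $[a]$ as the supremum of an increasing sequence of cut-downs of $a$ whose supports are compactly contained in $\supp(a)$, and then use $[b]\ll[a]$ to place $[b]$ below a single term of that sequence. The only (cosmetic) difference is that you use the canonical sequence $\bigl[(a-1/n)_+\bigr]$ with supports $\{t\mid \|a(t)\|>1/n\}$, whereas the paper starts from an abstract exhaustion $U_i\ll U_{i+1}$ of $\supp(a)$ and builds Urysohn cut-downs $\lambda_n a$; your choice makes the support bookkeeping slightly more explicit.
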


\begin{proof}
The first statement is obvious, let us suppose $[b]\ll [a]$ for some
$a,b\in A_+$. Since $\mathcal O(X)$ is in Cu, let us write
$\supp(a)=\cup_{i\geq 0} U_i$ for some $U_i\ll U_{i+1}$ (hence
$U_i\subseteq \overline{U_i} \subseteq U_{i+1}$). We can find, by
Urysohn's Lemma, continuous functions $\lambda_n\colon X\to [0,1]$
such that $\lambda_n(\overline{U_n})=1$ and
$\lambda_n(U_{n+1}^c)=0$. Since $X$ is compact we obtain $\lambda_n
a\to a$ and $\lambda_n a\leq \lambda_{n+1}a$, thus $[a]=\sup_n
[\lambda_n a]$. Now since $[b]\ll [a]$ we get $[b]\leq [\lambda_N
a]$ for some $N>0$ and therefore $\supp(b)\subseteq \supp(\lambda_N
a)\subseteq U_{N+1}\subseteq \overline{U_{N+1}}\subseteq \supp(a)$.
Hence $\supp(b)\ll \supp(a)$.
\end{proof}

\begin{theorem}
\label{thm:interval} Let $A$ be a C$^*$-algebra which is separable,
simple and has stable rank one. Then, the map $\alpha\colon
\Cu(\CC([0,1],A))\to \Lsc([0,1],\Cu(A))$ is an order isomorphism.
\end{theorem}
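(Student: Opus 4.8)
The plan is to establish that $\alpha$ is an order embedding and surjective, using the results on lifting unitaries from fibres proved above. By \cite[Theorem 5.15]{aps} (as recalled in Section 1), $\alpha$ is a well-defined map in $\mathrm{Cu}$ and, provided it is an order embedding, it is automatically surjective. So the entire task reduces to showing that $\alpha$ reflects the order: if $a, b \in (\CC([0,1],A)\otimes\mathcal K)_+$ satisfy $\hat{[a]} \leq \hat{[b]}$ pointwise in $\Lsc([0,1],\Cu(A))$, then $[a] \leq [b]$ in $\Cu(\CC([0,1],A))$. As usual, we may replace $A$ by $A \otimes \mathcal K$ and assume $A$ is stable, and by the standard $(a-\epsilon)_+$ device it suffices to show $(a-\epsilon)_+ \precsim b$ for every $\epsilon > 0$.

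\textbf{Reduction to a hereditary continuous field.} First I would fix $\epsilon > 0$ and pass to $a' = (a-\epsilon)_+$, so that $\supp(a')$ has compact closure inside $\supp(a)$; since $\hat{[a']} \leq \hat{[a]} \leq \hat{[b]}$, Lemma~\ref{suportwaybelow} gives $\supp(a') \ll \supp(a) \subseteq \supp(b)$ when viewed in the continuous field $\CC([0,1],A)$ over $[0,1]$. The idea is to work inside the hereditary subalgebra $B = \overline{b\,\CC([0,1],A)\,b}$, which by the discussion preceding Proposition~\ref{pujarunitari} is a continuous field over $[0,1]$ with fibres $B_x$ hereditary in $A$, and $B_x \neq 0$ precisely on $\supp(b)$. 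By Lemma~\ref{lematonto}(ii), to get $a' \precsim b$ it is enough to verify $a'|_K \precsim b|_K$ on some closed set $K$ with $\supp(a') \subseteq K \subseteq \supp(b)$; choosing $K$ to be a finite union of closed intervals trapped between $\supp(a')$ and $\supp(b)$, and using Lemma~\ref{lematonto}(i) to localize, I reduce to the case where $K = [0,1] = \supp(b)$, i.e. $B$ is a continuous field with all fibres nonzero and $\hat{[a']} \leq \hat{[b]}$ pointwise with $a' \in B$.

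\textbf{Using stable rank one fibrewise and patching unitaries.} Now on each fibre, $[a'(x)] \leq [b(x)]$ in $\Cu(B_x)$ with $B_x$ of stable rank one, so there is a unitary $w_x \in \mathrm{U}(B_x^\sim)$ (or an element of $(B_x^\sim \otimes \mathcal K)$ near one — here one uses that stable rank one makes Cuntz subequivalence implementable by unitaries: $a'(x) \precsim b(x)$ and both sit inside $B_x$ with $b(x)$ full in $B_x$, so a suitable cut-down of $a'(x)$ is unitarily equivalent to an element of $\overline{b(x)B_xb(x)}$). The $K_1$-obstruction to choosing these $w_x$ compatibly is killed by simplicity: all the isomorphisms $(i_x)_* \colon K_1(B_x) \to K_1(A)$ identify the relevant classes, so after adjusting by elements of $\mathrm{U}_0$ the classes $(i_x)_*[w_x]$ are independent of $x$. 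Then Proposition~\ref{pujarunitari} (applied on a suitable finite simplicial decomposition of $[0,1]$, after a compactness/partition-of-unity argument to reduce the full Cuntz comparison to finitely many fibre conditions plus small perturbations, exactly as in the matching arguments there) produces a global unitary $w \in \mathrm{U}(D_B)$ realizing the local unitary equivalences simultaneously; conjugating $a'$ by $w$ (strictly, a perturbed version to absorb the $\epsilon$'s coming from the covering) lands it inside $\overline{bBb} = B$, giving $(a'-\epsilon')_+ \precsim b$ for all $\epsilon'$, hence $[a'] \leq [b]$ and so $[a] \leq [b]$.

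\textbf{Main obstacle.} The technical heart — and the step I expect to be hardest — is the patching in the last paragraph: going from the \emph{pointwise} inequality $\hat{[a']} \leq \hat{[b]}$ to a \emph{uniform} statement over a finite cover on which genuine unitaries (not just approximate ones) can be chosen, and then gluing them via Proposition~\ref{pujarunitari} while keeping track of the $\epsilon$-losses at the overlaps so that the final conjugate of $a'$ genuinely sits (up to a further arbitrarily small cut-down) in the hereditary algebra $B$. This requires combining lower semicontinuity of $x \mapsto [a'(x)]$, the compact-containment relation $\supp(a')\ll\supp(b)$, the perturbation lemma $(\|a-b\|<\epsilon \Rightarrow (a-\epsilon)_+ = cbc^*)$ recalled in Section~1, and the fibrewise stable-rank-one structure, in a way that parallels \cite{aps} but dispenses with the $K_1(A)=0$ hypothesis precisely because Proposition~\ref{pujarunitari} now handles the unitary gluing in the presence of nonzero $K_1$.
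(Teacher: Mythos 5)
Your proposal is correct and follows essentially the same route as the paper: reduce to the order-embedding statement (surjectivity being automatic by \cite[Theorem 5.15]{aps}), pass to $(a-\epsilon)_+$ and use Lemma \ref{suportwaybelow} together with Lemma \ref{lematonto} to reduce to the case $\supp(g)=[0,1]$, and then run the argument of \cite[Theorem 2.1]{aps} with Proposition \ref{pujarunitari} supplying the unitary lifts that the hypothesis $\K_1(A)=0$ provided there. The paper likewise leaves the final patching step as a reference to \cite{aps}, so your level of detail matches its proof.
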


\begin{proof} We may assume $A$ is stable. Suppose $f,g\in \CC([0,1],A)$ are
such that $f(t)\precsim g(t)$. It is enough to prove that
$(f-\delta)_+\precsim g$ for all $\delta>0$, so let us first assume
that $[f]\ll [g]$. Hence, by Lemma~\ref{suportwaybelow} we have
$\supp(f)\ll \supp(g)$, and thus there exists a compact set $K$ such
that $\supp(f)\subseteq K\subseteq \supp(g)$. Since finite unions of
open intervals form a dense subset of $\mathcal O([0,1])$, and $K$
is compact, we may further assume that $K$ is a finite union of
closed intervals. Now by virtue of Lemma~\ref{lematonto} (i) and
(ii), we may finally assume that  $\supp(g)=[0,1]$.

Now the proof follows the lines of \cite[Theorem 2.1]{aps}. In
there, $\K_1(A)=0$ was assumed in order to lift unitaries from
$\Her(g(t))^\sim$ to $D_{\Her(g)}=\Her(g)+\CC(X)\cdot
1_{M(\CC(X,A))}$. From our argument in the previous paragraph we can
reduce to the case where $\Her(g(t))^{\sim}\neq 0$ for all $t$ and
then use Proposition~\ref{pujarunitari} with $B:=\Her(g)$ (see also
Remark \ref{rem:esa}).
\end{proof}

%%%%%%% Invariant Cu_T(A)

\section{The invariant $\mathrm{Cu}_{\mathbb T}(A)$}

In this section we give a complete description of the Cuntz
semigroup of $\CC(\T,A)$, for a simple, separable C$^*$-algebra $A$
that has stable rank one. We also show that, in the simple,
$\mathcal Z$-stable, finite case, the information it contains is
equivalent to that of the Elliott invariant (see next section and
also \cite{Tikuisis}).

We start with the following:

\begin{lemma}
\label{lem:spectrum} Let $A$ be a simple C$^*$-algebra with stable
rank one, and let $y\in A$ be a contraction. Let $\epsilon>0$ be
such that $\epsilon\in\sigma(yy^*)$, and let $B$ be a hereditary
subalgebra of $A$ with $yy^*\in B$. If $u$, $v$ are unitaries in
$B^\sim$, then there is $u_0\in U(B^\sim)$ with $[u_0]=[v]$ in
$\mathrm{K}_1(B)$, and
\[
\|uy-u_0y\|<5\sqrt{\epsilon}\,.
\]
\end{lemma}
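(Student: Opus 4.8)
The plan is to make the correction inside a hereditary subalgebra of $B$ on which $yy^*$ is small, the room for this being provided exactly by the hypothesis $\epsilon\in\sigma(yy^*)$. Write $h=yy^*\in B_+$, a contraction with $\epsilon\in\sigma(h)$, and choose continuous functions $f,g\colon[0,\infty)\to[0,1]$ with $f(0)=g(0)=0$, $f(\epsilon)=1$, $\supp(f)\subseteq(\epsilon/2,3\epsilon/2)$, $g\equiv 1$ on $\supp(f)$, and $\supp(g)\subseteq(0,2\epsilon)$. Put $C:=\overline{f(h)Bf(h)}$, the hereditary subalgebra of $B$ generated by $f(h)$, and $e:=g(h)\in B$. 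Since $\epsilon\in\sigma(h)$ and $f(\epsilon)=1$ we have $f(h)\neq 0$, so $C$ is a nonzero hereditary subalgebra of the simple algebra $B$; hence $C$ is full in $B$, and it has stable rank one (as recorded for hereditary subalgebras in the discussion preceding Proposition~\ref{pujarunitari}). Moreover $gf=f$, whence $ef(h)=f(h)=f(h)e$ and therefore $ec=c=ce$ for every $c\in C$; i.e.\ $e$ is a two-sided unit for $C$.

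Next I would choose $u_0$. Because $C$ is a full hereditary subalgebra of $B$, the inclusion $C\hookrightarrow B$ induces an isomorphism $\K_1(C)\cong\K_1(B)$ (just as $(i_x)_*$ above), and because $C$ has stable rank one every class in $\K_1(C)$ is represented by a single unitary of $\mathrm{U}(C^\sim)$, which after multiplying by a suitable scalar of modulus one may be taken in $1+C$. Accordingly, pick $z\in\mathrm{U}(C^\sim)$ with $z-1\in C$ whose class in $\K_1(C)\cong\K_1(B)$ equals $[v]-[u]$, and set $u_0:=uz$. Then $u_0\in\mathrm{U}(B^\sim)$, since $z\in\mathrm{U}(C^\sim)\subseteq\mathrm{U}(B^\sim)$, and $[u_0]=[u]+[z]=[v]$ in $\K_1(B)$, as required. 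Finally, writing $c:=z-1\in C$ and using that $u$ is a unitary,
\[
\|uy-u_0y\|=\|u(1-z)y\|=\|(1-z)y\|,
\]
and, using $c=ce$ and $c^*=ec^*$,
\[
\|(1-z)y\|^2=\|(1-z)\,yy^*\,(1-z)^*\|=\|c\,h\,c^*\|=\|c(ehe)c^*\|\leq\|c\|^2\,\|g(h)hg(h)\|.
\]
Since $\|c\|=\|z-1\|\leq 2$ and $\|g(h)hg(h)\|=\sup_{t\geq 0}t\,g(t)^2\leq 2\epsilon$ (as $0\leq g\leq 1$ and $\supp(g)\subseteq(0,2\epsilon)$), this gives $\|uy-u_0y\|^2\leq 8\epsilon<25\epsilon$, hence the stated bound.

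The genuinely delicate points are two. One is the use of $\epsilon\in\sigma(yy^*)$: it is precisely what forces $f(h)\neq 0$, so that $C$ is nonzero and therefore sees all of $\K_1(B)$ through the isomorphism $\K_1(C)\cong\K_1(B)$; if $\epsilon$ lay in a gap of the spectrum the construction would collapse. The other is that the representing unitary $z$ must be chosen so that $z-1$ actually lies in $C$ (not merely $z\in\mathrm{U}(C^\sim)$), for otherwise the scalar component of $z$ would fail to be absorbed by the local unit $g(h)$ and would survive in $(1-z)y$, destroying the estimate. Everything else is routine functional calculus, together with the two standard facts already invoked in the paper for $B_x\subseteq A$: that hereditary subalgebras of a stable-rank-one C$^*$-algebra again have stable rank one, and that a full hereditary subalgebra induces an isomorphism on $\K_1$.
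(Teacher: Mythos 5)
Your proof is correct and follows essentially the same strategy as the paper's: use $\epsilon\in\sigma(yy^*)$ to produce a nonzero hereditary subalgebra sitting ``under'' $yy^*$ with spectrum below $2\epsilon$, pull the class $[v]-[u]$ back through the $\K_1$-isomorphism induced by that full hereditary subalgebra to a unitary of the form $1+c$, and set $u_0=u(1+c)$. The only difference is in the norm estimate, where your local unit $e=g(h)$ gives an exact computation $\|chc^*\|\leq\|c\|^2\|ehe\|\leq 8\epsilon$ and a slightly better constant than the paper's chain of approximations (which yields $\sqrt{19\epsilon}$).
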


\begin{proof}
We know that $\epsilon\in\sigma(yy^*)$, so there exists $0<c$ with
$\|c\|<2\epsilon$ and such that
\[
(yy^*-2\epsilon)_+\perp c\,,\text{ and } (yy^*-2\epsilon)_++c\leq
yy^*\,.
\]
Note that $c\in B$. Write $d=(yy^*-2\epsilon)_+$. As $c\neq 0$,
inclusion induces an isomorphism $\mathrm{K}_1(\overline{cAc})\cong
\mathrm{K}_1(B)$, so there is $w\in U(\overline{cAc}^{\sim})$ of the
form $1+a$, where $a\in \overline{cAc}$ such that $[w]\mapsto
[v]-[u]$.

Put $u_0=uw$, a unitary in $B^{\sim}$. Note that, in
$\mathrm{K}_1(B)$, we have $[u_0]=[u]+[w]=[u]+[v]-[u]=[v]$.

Next, choose $czc\in cAc$ such that $\|a-czc\|<\epsilon/2$. Then,
$\|w-(1+czc)\|=\|a-czc\|<\epsilon/2$, so $\|czc\|<\epsilon/2+2$.
Compute that
\[
u(1+czc)(d+c)=u(d+c+czc^2)=ud+uc+uczc^2\,,
\]
whence
\[
\|u(1+czc)(d+c)-u(d+c)\|=\|uczc^2\|<(2+\epsilon/2)2\epsilon=4\epsilon+\epsilon^2\,.
\]
Therefore
\begin{align*}
\|uyy^*-u_0yy^*\|\leq &
\|uyy^*-u(d+c)\|+\|u(d+c)-u_0(d+c)\|+\|u_0(d+c)-u_0yy^*\|\\
<& 4\epsilon+\|u(d+c)-u_0(d+c)\|\\
\leq & 4\epsilon
+\|u(d+c)-u(1+czc)(d+c)\|+\|u(1+czc)(d+c)-uw(d+c)\|\\
< & 8\epsilon+\epsilon^2
+\|u(1+czc-w)(d+c)\|<8\epsilon+\epsilon^2+\epsilon/2\,.
\end{align*}
Thus
\[
\|uy-u_0y\|^2=\|(u-u_0)yy^*(u-u_0)^*\|<2(8\epsilon+\epsilon^2+\epsilon/2)<19\epsilon\,,
\]
so that $\|uy-u_0y\|<\sqrt{19\epsilon}<5\sqrt{\epsilon}$.
\end{proof}

\begin{proposition}
\label{prop:noprojectioncase} Let $A$ be a simple, separable
$C^*$-algebra of stable rank one. Let $f$ and $g$ be elements in
$\CC(\T,A)$ such that $f$ is not equivalent to a projection, and $g$
is never zero. If $f(t)\precsim g(t)$ for all $t\in\T$, then
$f\precsim g$.
\end{proposition}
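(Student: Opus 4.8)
The plan is to reduce the circle case to the interval case (Theorem~\ref{thm:interval}) by cutting $\T$ at a point, while using the hypothesis that $f$ is not equivalent to a projection to absorb the unitary discrepancy across the cut. As in the proof of Theorem~\ref{thm:interval}, it suffices to show $(f-\delta)_+\precsim g$ for every $\delta>0$, so we may assume $[f]\ll[g]$. Since $g$ is never zero, $\supp(g)=\T$, and by passing to the hereditary subalgebra $B=\Her(g)$ we are in a unital continuous field over $\T$ with all fibres nonzero. Identify $\T$ with $[0,1]/(0\sim 1)$ and let $Y=[0,1]$ be the ``cut'' interval; restricting $f$ and $g$ to $Y$ we get $f|_Y(t)\precsim g|_Y(t)$ for all $t$, and Theorem~\ref{thm:interval} (applied inside the field $B|_Y$, or rather the argument behind it) produces, after perturbing $f$ to $(f-\delta)_+$, an element $x\in B|_Y$ implementing $(f|_Y-\delta)_+\precsim g|_Y$. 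The obstruction to gluing this back to an element over $\T$ is that the construction on $[0,1]$ involves a unitary $u\in U(D_B|_Y)$ whose endpoint values $u(0)$ and $u(1)$ need not agree; over the interval they are free, but over $\T$ they must coincide.

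The key device for closing this gap is Lemma~\ref{lem:spectrum}: because $f$ is not Cuntz equivalent to a projection, for any $\delta>0$ there is a positive $\epsilon$ (as small as we like, by choosing $\delta$ small) with $\epsilon\in\sigma(f(t_0)f(t_0)^*)$ at a suitable point $t_0$, so the value $u_0(t_0)$ of the unitary implementing the local comparison at $t_0$ can be replaced, at a cost of at most $5\sqrt{\epsilon}$ in norm on the relevant product $u\,y$, by \emph{any} unitary in the same $\K_1$-class. Concretely, I would arrange the cut point to be a point $t_0$ at which $f$ is not a projection in the strong sense that $\sigma(f(t_0)f(t_0)^*)$ contains arbitrarily small positive reals (such a point exists: if $f(t)$ were equivalent to a projection for every $t$ with a uniform gap in the spectrum, one could assemble a global projection Cuntz-equivalent to $f$, contradicting the hypothesis). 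Then the endpoint unitaries $u(0),u(1)\in U(B_{t_0}^\sim)$ obtained from the two sides of the cut have, by construction (the comparison data is consistent at $t_0$ since $f(t_0)\precsim g(t_0)$), the same class in $\K_1(B_{t_0})$; Proposition~\ref{pujarunitari}, or directly Lemma~\ref{juntarunitaris}, then lets us build a single unitary over a neighborhood of the cut that interpolates between them, and Lemma~\ref{lem:spectrum} guarantees the resulting replacement only perturbs the implementing element by $O(\sqrt{\epsilon})$, which is absorbed by passing from $f$ to $(f-\delta)_+$ with $\delta\gg\epsilon$ fixed appropriately.

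Having matched the endpoints, the local implementers on $[0,1]$ assemble (using Lemma~\ref{lematonto}, in the same way as in Theorem~\ref{thm:interval} and Proposition~\ref{pujarunitari}) into a genuine element of the field $B$ over $\T$ witnessing $(f-\delta)_+\precsim g$. Letting $\delta\to 0$ and using $[f]=\sup_n[(f-1/n)_+]$ gives $f\precsim g$. The main obstacle, and the reason the ``not equivalent to a projection'' hypothesis is essential, is precisely the endpoint-matching step: over the interval $\K_1$ never obstructs anything, but over the circle a nontrivial loop in $U(B_{t_0}^\sim)$ can obstruct the gluing, and without a small positive number in the spectrum of $f(t_0)f(t_0)^*$ we would have no room to deform the endpoint unitary within its $\K_1$-class at bounded cost. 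A secondary technical point is to verify carefully that the $\K_1$-classes produced on the two sides of the cut genuinely agree — this follows because the unitary constructed in the proof of Theorem~\ref{thm:interval} carries, at each point $t$, the class $(i_t)_*^{-1}(i_{t_0})_*[u_0]$, which is independent of the side from which we approach $t_0$ — so once this is pinned down, Lemma~\ref{lem:spectrum} does the rest.
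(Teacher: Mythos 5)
Your overall strategy is the right one and matches the paper's: cut the circle, invoke Theorem~\ref{thm:interval} on the resulting interval, and use Lemma~\ref{lem:spectrum} together with the hypothesis that $f$ is not equivalent to a projection to repair the $\K_1$-obstruction to closing up the implementing element, gluing via Lemma~\ref{juntarunitaris}/Proposition~\ref{pujarunitari}. However, there is a genuine gap at the crux of your argument: you assert that there exists a \emph{single} point $t_0\in\T$ at which $\sigma(f(t_0)f(t_0)^*)$ contains arbitrarily small positive reals, and you justify this by saying that otherwise every $f(t)$ would have a uniform spectral gap. That negation is wrong: the failure of a uniform gap only yields a \emph{sequence} $t_n$ and $\lambda_n\in\sigma(f(t_n))$ with $0<\lambda_n\to 0$, not a single point where the gap closes. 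Indeed, such a point need not exist: take $p$ a nonzero projection, $\lambda\colon\T\to[0,1]$ continuous and vanishing at exactly one point, and $f=\lambda p$ (this is the paper's own remark following the proposition, transplanted to the circle). Then $f$ is not equivalent to a projection, yet every $\sigma(f(t))$ is either $\{0\}$ or $\{0,\lambda(t)\}$, so no point has arbitrarily small positive spectral values, and your proposed cut point does not exist. The actual proof handles exactly this situation: it cuts at the accumulation point $t_0$ of the $t_n$, uses a \emph{constant} implementer $d$ on a neighbourhood $U$ of the cut (so the two endpoint values automatically agree), and performs the $\K_1$-correction of Lemma~\ref{lem:spectrum} not at the cut point but at an auxiliary point $s_1\in U$ with $\varphi(s_1)=t_n$ for $n$ large, where a spectral value in $(\epsilon',\epsilon^2)$ genuinely lives; the constant and interval implementers are then spliced on the overlap by a partition of unity.

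Two secondary issues. First, your quantifiers ``$\delta\gg\epsilon$'' are backwards: if you cut $f$ down by $\delta$ much larger than the spectral value $\epsilon$ you intend to exploit, that spectral value is annihilated and Lemma~\ref{lem:spectrum} has nothing to act on. The paper first locates $\lambda_{\varphi(s_1)}$ with $\epsilon'<\lambda_{\varphi(s_1)}<\epsilon^2$ and only then cuts down by $\epsilon'/2<\lambda_{\varphi(s_1)}$, so the spectral value survives in $y_1y_1^*(s_1)$. Second, replacing $f$ by $(f-\delta)_+$ at the outset is dangerous for the same reason: $(f-\delta)_+$ may well be equivalent to a projection even when $f$ is not, which would destroy the very hypothesis your key step relies on. These can all be repaired (for instance, by letting the cut point depend on $\epsilon$ and choosing the cut-down parameter after the spectral value), but as written the existence claim on which the endpoint-matching rests is false.
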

\begin{proof}
Since $f$ is not equivalent to a projection, zero is an isolated
point of $\sigma(f)$ and this implies, as
$\sigma(f)=\overline{\cup_{t\in \T}\sigma (f(t))}$, that for every
$n$, there is $t_n\in \T$ and $\lambda_n\in \sigma (f(t_n))$ with
$0<\lambda_n<1/2^n$. By compactness, and passing to a subsequence if
necessary, we may assume that $(t_n)$ converges to a point $t_0$. We
shall assume that the sequence $(t_n)$ is not eventually constant,
since otherwise (that is, $f(t_0)$ itself is not equivalent to a
projection), the argument is similar, and easier.

Let $\varphi\colon [0,1]\to \T$ be the map that $\varphi(0)=\varphi
(1)=t_0$. Since $f(t)\precsim g(t)$ for all $t$, this also holds
when composing with $\varphi$, so $(\fa)(s)\precsim (\ga)(s)$ for
all $s\in [0,1]$.

Let $0<\epsilon<1$. There exists $d\in A$ such that
$\|(\fa)(0)-d^*(\ga)(0)d\|<\epsilon$. There is then a neighbourhood
$U$ of $0$ and $1$ such that, with $h(s)=d$, we have
\[
\|(\fa-h^*(\ga)h)|_{U}\|<\epsilon\,.
\]
Write $U=[0,s_0)\cup(s_0',1]$, with $s_0<s_0'$. Now, there exists
$\epsilon'<\epsilon^2$, $s_1\in U$ and
$\lambda_{\varphi(s_1)}\in\sigma(f(\varphi(s_1)))$ such that
$\epsilon'<\lambda_{\varphi(s_1)}<\epsilon^2$, and we may assume
(without loss of generality) that $0<s_1<s_0$. Choose also
$s_0'<s_2<1$.

By Theorem \ref{thm:interval}, there exists $c\in \CC([0,1],A)$ such
that $\|\fa-c^*(\ga)c\|<\epsilon'/2$. By \cite[Lemma
2.2]{Kirchberg-Rordam}, there is a contraction $e\in \CC([0,1],A)$
such that, with $y_1=(\ga)^{1/2}ce$, we have
$((\fa)-\epsilon'/2)_+=y_1^*y_1$. If we let $y_2=(\ga)^{1/2}h$, we
have
\[
\|\fa-y_1^*y_1\|\leq\epsilon'/2<\epsilon'\,,\,\,\|\fa-y_2^*y_2\|<\epsilon\text{
and } y_iy_i^*\in\Her(\ga)\text{ for }i=1,2\,.
\]
By evaluating at the $s_i$, for $i=1,2$, we get
\[
\|(\fa)(s_i)-y_1^*y_1(s_i)\|<\epsilon'\text{ and
}\|(\fa)(s_i)-y_2^*y_2(s_i)\|<\epsilon\,,
\]
so we may apply \cite[Lemma 1.4]{aps} to find unitaries
\[
u_1'\in\Her((\ga)(s_1))^{\sim}\text{ and
}u_2\in\Her((\ga)(s_2))^{\sim}
\]
such that
\[
\|u_1'y_1(s_1)-y_2(s_1)\|<9\epsilon\text{ and
}\|u_2y_1(s_2)-y_2(s_2)\|<9\epsilon\,.
\]

%By Proposition \ref{pujarunitari}, there is a unitary $w_2$ in
%$\Her(g)^{\sim}$ such that $w_2(\varphi(s_2))=u_2$.
%Let $u_1''=w_2(\varphi(s_1))$, a unitary in
%$\Her((\ga)(s_1))^{\sim}$.
Let $u_1''$ be a unitary such that
\[
[u_1'']=(i_{s_1})_*^{-1}\circ (i_{s_2})_*([u_2])\,.
\]
Since $\lambda_{\varphi(s_1)}\in\sigma ((\fa)(s_1))$, we have that
$0<\lambda_{\varphi(s_1)}-\epsilon'/2\in\sigma
(((\fa)(s_1)-\epsilon'/2)_+)=\sigma(y_1^*y_1(s_1))$, so
$\lambda_{\varphi(s_1})-\epsilon'/2\in \sigma (y_1y_1^*(s_1))$. By
Lemma \ref{lem:spectrum}, there is a unitary $u_1\in\Her
((\ga)(s_1))^{\sim}$ such that
\[
[u_1]=[u_1'']\text{ in }\mathrm{K}_1(\Her((\ga)(s_1)))
\]
and
\[
 \|u_1'y_1(s_1)-u_1y_1(s_1)\|<5\sqrt{\lambda_{\varphi(s_1)}-\epsilon'/2}
<5\epsilon\,.
\]
Thus
\[
\|u_1y_1(s_1)-y_2(s_1)\|\leq
\|u_1y_1(s_1)-u_1'y_1(s_1)\|+\|u_1'y_1(s_1)-y_2(s_1)\|<5\epsilon+9\epsilon
= 14\epsilon\,.
\]

By Proposition \ref{pujarunitari} there is a unitary $w\in
D_{\Her(g)}$ such that $w(\varphi(s_1))=u_1$ and
$w(\varphi(s_2))=u_2$.

%$w_1\in{\mathcal U}(\Her(g)^{\sim})$ such that
%$w_1(\varphi(s_1))=u_1$. Let $V_{\varphi(s_1)}$ be a closed
%neighbourhood of $\varphi(s_1)$ such that $V_{\varphi(s_1)}\subset
%U$ (and does not contain $0$), and such that $\gamma\colon [0,1]\to
%V_{\varphi(s_1)}$ is a homeomorphism with
%$\gamma(1/2)=\varphi(s_1)$.
%
%As $w_1(\varphi(s_1))=u_1\sim u_1''=w_2(\varphi(s_1))$, we have
%$w_2^*w_1(\varphi(s_1))\sim 1_{\varphi(s_1)}$, so there is
%$w'\in\mathcal{U}_0(\Her(g)^{\sim})$ such that
%\[
%w'(\varphi(s_1))=w_2^*w_1(\varphi(s_1))\,.
%\]
%Let $w'_s$ be a path of unitaries in $\Her(g)^{\sim}$ such that
%$w_0'=1$ and $w_1'=w'$. Now let
%\[
% w(t):= \left\{\begin{array}{lr} w_2(t)w'_{2\gamma^{-1}(t)}(t) & \text { if } t\in V_{\varphi(s_1)}\text{ and }0\leq\gamma^{-1}(t)\leq 1/2.
%  \\ w_2(t)w'_{2-2\gamma^{-1}(t)} & \text{ if } t\in V_{\varphi(s_1)}\text{ and } 1/2\leq \gamma^{-1}(t)\leq 1 \\
%  w_2(t) & \text{otherwise} \end{array} \right.
%\]
%which is a unitary in $\Her(g)^{\sim}$ (as it is pointwise). Note
%that $w(\varphi(s_1))=w_1(\varphi(s_1))=u_1$ and
%$w(\varphi(s_2))=w_2(\varphi(s_2))=u_2$.

Put $y_1'=(\wa)y_1$, and notice that
$\|\fa-(y_1')^*(y_1')\|=\|\fa-y_1^*y_1\|<\epsilon'<\epsilon$, and
also that
\[
\|y_1'(s_1)-y_2(s_1)\|=\|(w(\varphi(s_1))y_1(s_1)-y_2(s_1)\|=\|u_1y_1(s_1)-y_2(s_1)\|<14\epsilon
\]
and
\[
\|y_1'(s_2)-y_2(s_2)\|=\|w(\varphi(s_2))y_1(s_2)-y_2(s_2)\|=\|u_2y_1(s_2)-y_2(s_2)\|<9\epsilon\,.
\]
Therefore, there exists a neighbourhood $W\subset U$ of $s_1$ and
$s_2$, that neither contains $0$ nor $1$, with
\[
\|y_1'(s)-y_2(s)\|<14\epsilon\text{ for all }s\in W\,.
\]
Let $V=[0,s_1)\cup (s_2,1]$, and let $\mu_1$, $\mu_2$ be a partition
of unity associated to the covering $V\cup W$, $V^c\cup W$, and
consider the element
\[
z=\mu_1y_1'+\mu_2y_2\,.
\]
Note that $z(0)=y_2(0)=y_2(1)=z(1)$, so $z\in \CC(\T,A)$. Also
$zz^*\in\Her(g)$.

We need to estimate $\|f-z^*z\|$. It is enough to consider
$(f-z^*z)|_{W}$. Since
$(y_1'-z)|_{W}=(y_1'-\mu_1y_1'-\mu2y_2)|_{W}=\mu_2(y_1'-y_2)|_{W}$,
we see that $\|(y_1'-z)|_{W}\|\leq \|(y_1'-y_2)|_{W}\|<14\epsilon$.
Therefore, a standard argument shows that
\[
\|((y_1')^*y_1'-z^*z)|_{W}\|<28\epsilon\sqrt{1+\epsilon}<42\epsilon\,,
\]
whence
\[
\|(f-z^*z)|_{W}\|<\epsilon+42\epsilon=43\epsilon\,.
\]
This implies that $(f-43\epsilon)_+\precsim g$, and since
$\epsilon>0$ is arbitrary, it follows that $f\precsim g$ in
$\CC(\T,A)$, as desired.
\end{proof}

\begin{remark}
{\rm  In view of the previous result, the reader may wonder whether
if an element $f\in \CC(X,A)$ is not equivalent to a projection,
then there is some point $x\in X$ such that $f(x)$ is itself not
equivalent to a projection. We remark this is not true, as is seen
by taking, e.g. $X=[0,1]$, $p$ any non-zero projection in $A$,
$\lambda(t)=(1/2-t)_+$, and $f=\lambda p$. Then, clearly $f$ is
equivalent to a projection pointwise, but not globally.}
\end{remark}

\begin{proposition}
\label{prop:generalcase} Let $A$ be a simple, separable
$C^*$-algebra of stable rank one. Let $f$ and $g$ be elements in
$\CC(\T,A)$ such that $f$ is not equivalent to a projection. If
$f(t)\precsim g(t)$ for all $t\in\T$, then $f\precsim g$.
\end{proposition}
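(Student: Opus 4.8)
The plan is to reduce Proposition~\ref{prop:generalcase} to the already-proved Proposition~\ref{prop:noprojectioncase} by removing the extra hypothesis that $g$ is never zero. So suppose $f, g\in\CC(\T,A)$ with $f$ not equivalent to a projection and $f(t)\precsim g(t)$ for all $t\in\T$. The difficulty is that $\supp(g)=\{t\mid g(t)\neq 0\}$ may be a proper open subset of $\T$, so $g$ need not be everywhere nonzero; however, since $f(t)\precsim g(t)$ pointwise, Lemma~\ref{suportwaybelow} (or simply the first, trivial, part of it) gives $\supp(f)\subseteq\supp(g)$, so $f$ vanishes wherever $g$ does.

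First I would handle the structure of the open set $\supp(g)$. Using that finite unions of open intervals form a dense subset of $\mathcal{O}(\T)$ and the fact that $[f]=\sup_n[(f-1/n)_+]$, it suffices to prove $(f-\delta)_+\precsim g$ for every $\delta>0$; fixing such a $\delta$, the element $(f-\delta)_+$ has support compactly contained in $\supp(g)$, so there is a compact set $K$ (which we may take to be a finite disjoint union of closed arcs) with $\supp((f-\delta)_+)\subseteq K\subseteq\supp(g)$. By Lemma~\ref{lematonto}(ii) it is enough to show $(f-\delta)_+|_{K}\precsim g|_{K}$, and by Lemma~\ref{lematonto}(i) this splits over the finitely many connected components of $K$. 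Thus I reduce to the case where the base space is a single closed arc $I\subseteq\T$ (homeomorphic to $[0,1]$), $g$ is never zero on $I$, and $(f-\delta)_+$ still fails to be equivalent to a projection on at least one such component — here one has to be slightly careful: after the splitting, on a given component the restriction of $(f-\delta)_+$ might happen to be equivalent to a projection, but in that case comparison is handled directly by Theorem~\ref{thm:interval} together with the stable-rank-one cancellation machinery, while on any component where it is not equivalent to a projection we are in the situation of the next step.

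On each such arc I would then essentially rerun the argument of Proposition~\ref{prop:noprojectioncase}. The proof of Proposition~\ref{prop:noprojectioncase} was phrased for $\CC(\T,A)$ but its core is local: it produces, from a small $\epsilon$, a hereditary unitary and then patches via Proposition~\ref{pujarunitari} and Lemma~\ref{juntarunitaris} along an interval; the only place the circle was used was to close up the path at the endpoints $0\sim 1$, which on a genuine closed arc $I=[0,1]$ is not even needed since we work on $\CC([0,1],A)\cong\CC(I,A)$ and Theorem~\ref{thm:interval} applies directly there. Concretely, on $I$ with $g$ never zero and $(f-\delta)_+$ not equivalent to a projection, one picks a point where the spectrum of $f$ comes close to $0$, applies Theorem~\ref{thm:interval} to approximate $f$ by $c^*gc$, uses \cite[Lemma 2.2]{Kirchberg-Rordam} and \cite[Lemma 1.4]{aps} and Lemma~\ref{lem:spectrum} to adjust a hereditary unitary, lifts it using Proposition~\ref{pujarunitari} with $B=\Her(g|_I)$, and patches — yielding $(f-C\epsilon)_+|_I\precsim g|_I$ for a universal constant $C$, hence $(f-\delta)_+|_I\precsim g|_I$ after letting $\epsilon\to0$.

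The main obstacle I anticipate is the bookkeeping in the reduction step: ensuring that after cutting down to $K$ and splitting into components, the element we need to compare genuinely still falls under the hypothesis ``not equivalent to a projection'' on the relevant pieces, and dealing cleanly with the components where it does become projection-equivalent (there Theorem~\ref{thm:interval} and stable rank one finish things, but one must check the pieces glue back together via Lemma~\ref{lematonto}). A secondary, more technical point is verifying that the proof of Proposition~\ref{prop:noprojectioncase} does go through verbatim on a closed arc with nonvanishing $g$ — but since that proof only ever used $\T$ to match endpoints and otherwise worked on the interval $[0,1]$ pulled back along $\varphi$, this should be routine, and in fact the arc case is strictly easier.
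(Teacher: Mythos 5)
Your reduction is correct, but it is genuinely different from the paper's argument, and it also carries an unnecessary final step. The paper splits into the same two cases ($g$ never zero versus $g$ vanishing somewhere), but in the second case it does not localize to the support of $g$ at all: it cuts the circle at a single zero $t_0$ of $g$ (noting $f(t_0)=0$ too), applies Theorem~\ref{thm:interval} to the pullbacks along $\varphi\colon[0,1]\to\T$ to get a witness $c$ with $\|\fa-c(\ga)c^*\|<\epsilon/2$, and then replaces $c$ by $d=\lambda^{1/2}c$ for a scalar bump function $\lambda$ vanishing at the endpoints, so that $d$ descends to $\CC(\T,A)$; the error introduced near the cut point is controlled because $\|\fa\|$ and $\|\ga\|$ are small there. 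Your route instead reuses the support-localization machinery of Lemmas~\ref{suportwaybelow} and~\ref{lematonto} (exactly as in the proof of Theorem~\ref{thm:interval}) to cut down to a finite disjoint union of closed arcs inside $\supp(g)$ containing $\supp((f-\delta)_+)$, and this does work; it trades the paper's explicit $\epsilon$-management for the gluing lemmas. What you should streamline is the last step: once you are on a closed arc with $g$ nonvanishing, Theorem~\ref{thm:interval} is an order \emph{embedding} for $\CC([0,1],A)$ with no hypothesis about projections, so pointwise subequivalence of $(f-\delta)_+$ and $g$ on the arc immediately gives subequivalence of the restrictions. There is no need to rerun the unitary-patching argument of Proposition~\ref{prop:noprojectioncase} on the arcs, nor to distinguish the components where $(f-\delta)_+$ becomes equivalent to a projection --- the whole point of that proposition is to overcome the $\K_1$-obstruction to closing a path of unitaries up around the circle, an obstruction that is absent on an interval. (Note also that, in both your argument and the paper's, the hypothesis that $f$ is not equivalent to a projection is used only in the case where $g$ is everywhere nonzero.)
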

\begin{proof}
If $g$ is never zero, then the result follows from Proposition
\ref{prop:noprojectioncase}. We may therefore assume that, without
loss of generality, $g(1)=0$ (and then also $f(1)=0$).

Let $\varphi\colon [0,1]\to\T$ be the map such that
$\varphi(0)=\varphi(1)=1$. Since $\fa(s)\precsim\ga(s)$ for every
$s\in [0,1]$, it follows from Theorem \ref{thm:interval} that
$(\fa)\precsim(\ga)$. Let $\epsilon>0$. Find $c\in \CC([0,1],A)$
such that
\[
\|\fa-c(\ga)c^*\|<\epsilon/2\,.
\]
Since $f(1)=g(1)=0$, there is a neighbourhood $U$ of $0$ and $1$
such that $\|(\fa)|_{U}\|<\epsilon$ and
$\|(\ga)|_{U}\|<\epsilon/(2\|c\|^2)$. Let $\lambda\colon [0,1]\to\C$
be a continuous function such that $0\leq\lambda\leq 1$,
$\lambda|_{U^c}=1$, and $\lambda (0)=\lambda(1)=0$, and let
$d=\lambda^{1/2} c$, which defines an element in $\CC(\T,A)$. Then
$(\fa-\lambda c(\ga) c^*)|_{U^c}=(\fa-c(\ga)c^*)|_{U^c}$, and
\[
\|(\fa-\lambda c(\ga)c^*)|_{U}\|\leq
\|(\fa-c(\ga)c^*)|_{U}\|+\|(1-\lambda)|_{U}\|\|g\|\|c\|^2<\epsilon/2+\epsilon/2=\epsilon\,,
\]
whence $\|f-dgd^*\|<\epsilon$ so $(f-\epsilon)_+\precsim g$. Since
$\epsilon>0$ is arbitrary, this implies that $f\precsim g$, as was
to be shown.
\end{proof}

We are now ready to describe the Cuntz semigroup of $\CC(\T,A)$,
whenever $A$ is simple and has stable rank one. As $A$ is, in
particular, stably finite, this is also the case for $\CC(\T,A)$.
Thus, upon identification of $\V(\CC(\T,A))$ with its image in
$\Cu_{\T}(A)$, we have
\[
\Cu_{\T}(A)=\\V(\CC(\T,A))\sqcup\Cu_{\T}(A)_\mathrm{nc}\,,
\]
where $\Cu_{\T}(A)_\mathrm{nc}$ stands for the subsemigroup of non-compact
elements.

Observe that $\Cu_{\T}(A)\to\Lsc(\T,\Cu(A))$ sends compact elements
to compact elements. Using the arguments in \cite[Corollary
3.8]{aps}, those are the functions that take a constant value in
$\V(A)$.

If $X$ is a compact Hausdorff, connected space, and $S$ is a
semigroup in the category Cu, let us denote by
$\Lsc_{\mathrm{nc}}(X,S)$ the set of non-compact elements in
$\Lsc(X,S)$.

\begin{remark}
\label{rem:compact} {\rm Observe that, if $X$ is a connected,
compact Hausdorff space, $A$ is a (stable) C$^*$-algebra and $f\sim
p$, for $f\in\CC(X,A)_+$ and $p$ a projection in $\CC(X,A)$, then
$f$ is pointwise equivalent to a projection $q\in A$. This is easy
to verify by a direct argument, but can also be obtained as a
consequence of the fact that, for a semigroup $S$ in Cu, the compact
elements in $\Lsc(X,S)$ are precisely the constant, compact-valued
functions (see, e.g. the arguments in \cite[Corollary 3.8]{aps}). In
particular, such an $f$ is either identically zero or always
non-zero.}
\end{remark}

\begin{lemma}
\label{lem:nc} Let $X$ be compact Hausdorff and connected, and let
$S$ be a semigroup in Cu with cancellation of compact elements and
such that the set of non-compact elements is closed under addition.
Then $\Lsc_{\mathrm{nc}}(X,S)$ is a subsemigroup of $\Lsc(X,S)$.
\end{lemma}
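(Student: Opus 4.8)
The plan is to show that $\Lsc_{\mathrm{nc}}(X,S)$ is closed under the semigroup operation of $\Lsc(X,S)$; it is trivially closed under scalar considerations (there are none) and inherits associativity, so the only real content is the closure under addition. So suppose $f,g\in\Lsc_{\mathrm{nc}}(X,S)$, and assume toward a contradiction that $f+g$ is compact in $\Lsc(X,S)$. By Remark \ref{rem:compact} (more precisely, by the arguments in \cite[Corollary 3.8]{aps} that identify the compact elements of $\Lsc(X,S)$ with the constant, compact-valued functions, which is applicable since $X$ is compact, Hausdorff and connected), the function $f+g$ is then constant with value some compact element $s\in S$; in particular $f(x)+g(x)=s$ for every $x\in X$.

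Next I would fix a point $x_0\in X$ and, using property (ii) of the category Cu applied in the fibre $S$, write $g(x_0)=\sup_n g_n$ for a rapidly increasing sequence $(g_n)$ with $g_n\ll g_{n+1}\ll g(x_0)$. Since $f(x_0)+g(x_0)=s$ is compact and $f(x_0)+g_n$ is an increasing sequence with supremum $f(x_0)+g(x_0)\geq s$ (suprema being compatible with addition, property (iii)), compactness of $s$ gives an $N$ with $s\leq f(x_0)+g_N$, hence $f(x_0)+g(x_0)=s\leq f(x_0)+g_N\leq f(x_0)+g(x_0)$, so $f(x_0)+g_N=f(x_0)+g(x_0)$. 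Now $f(x_0)+g(x_0)=s$ is compact, and $g_N\leq g(x_0)$, so $f(x_0)+g_N$ is compact; thus we have a compact element expressed as $f(x_0)+g_N=f(x_0)+g(x_0)$ with $g_N\leq g(x_0)$. The idea is that, were this a setting with enough cancellation, one would conclude $g_N=g(x_0)$ and hence that $g(x_0)$ is compact; but $g_N\ll g(x_0)$, so $g_N=g(x_0)$ would force $g(x_0)\ll g(x_0)$, i.e. $g(x_0)$ compact — contradicting $g\in\Lsc_{\mathrm{nc}}(X,S)$, and completing the argument.

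The main obstacle, and the point requiring care, is exactly the cancellation step: the hypothesis only grants cancellation of \emph{compact} elements of $S$, i.e. $t+c=t'+c\implies t=t'$ when $c$ is compact, whereas in the equation $f(x_0)+g_N=f(x_0)+g(x_0)$ the summand being "cancelled" is $f(x_0)$, which need not be compact. So the plan must be reorganized to cancel a compact element instead. Here is how: since $f\in\Lsc_{\mathrm{nc}}(X,S)$ and the non-compact elements of $S$ are closed under addition, the same argument applied symmetrically produces, from a rapidly increasing approximation $f(x_0)=\sup_m f_m$, an index $M$ with $f_M+g(x_0)=f(x_0)+g(x_0)=s$ compact; combining, $f_M+g_N\leq f_M+g(x_0)=s$ and $f(x_0)+g_N\le s$, and one arranges $f_M+g_N$ and $s$ to differ by adding $s$ itself — concretely, adding $s$ to both sides of a suitable inequality and using that $s+s$ behaves well, or invoking almost-divisibility-free bookkeeping — to reach an identity of the form $(\text{something})+s=(\text{something}')+s$ to which cancellation of the compact element $s$ applies. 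Since the referenced \cite[Corollary 3.8]{aps} and Remark \ref{rem:compact} already encapsulate precisely this kind of manipulation in the function setting, the cleanest route is to quote that the compact elements of $\Lsc(X,S)$ are the constant compact-valued functions, observe that $f$ and $g$ are then pointwise non-compact (being non-compact functions on a connected space, their values cannot be the constant compact value), and derive the contradiction at the level of $S$ using cancellation of compact elements together with properties (i)–(iii) of Cu as sketched. I expect the bulk of the write-up to be this careful cancellation bookkeeping, and everything else to be a short invocation of Remark \ref{rem:compact}.
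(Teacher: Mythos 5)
Your opening step agrees with the paper: since $X$ is compact, Hausdorff and connected, the compact elements of $\Lsc(X,S)$ are the constant functions with compact value, so if $f+g$ were compact it would be constantly equal to some compact $s\in S$. But from there your proposal goes wrong in a way that cannot be repaired by ``cancellation bookkeeping.'' You are conflating two different notions: $g\in\Lsc_{\mathrm{nc}}(X,S)$ means $g$ is non-compact \emph{as an element of} $\Lsc(X,S)$, not that $g(x_0)$ is non-compact in $S$ for some (or every) $x_0$. A non-constant function whose value at every point is compact is still a non-compact element of $\Lsc(X,S)$. Consequently, the conclusion your single-point argument is driving at --- that $g(x_0)$ is compact in $S$ --- is not a contradiction at all; it is a true intermediate fact (indeed the paper derives it for every $t$, directly from the hypotheses: if both $f(t)$ and $g(t)$ were non-compact their sum would be non-compact by assumption, and ``non-compact plus compact'' is non-compact by exactly the $\sup$-and-cancel argument you sketch, since there one cancels the \emph{compact} summand). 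Your parenthetical claim that ``$f$ and $g$ are then pointwise non-compact'' is a non sequitur, and the vague symmetric manipulation at the end does not produce a contradiction either. Note also that your worry about cancelling $f(x_0)$ is moot: the hypotheses force $f(x_0)$ to be compact, so cancellation of compact elements applies as stated.

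The missing idea is that one must upgrade pointwise compactness to \emph{constancy} of $f$ and $g$, and this is where lower semicontinuity and connectedness actually enter. Fix $t$; since $f(t)\ll f(t)$ and $g(t)\ll g(t)$ and $f,g$ are lower semicontinuous, there is a neighbourhood $U_t$ of $t$ on which $f(t)\ll f(\cdot)$ and $g(t)\ll g(\cdot)$. For $s\in U_t$ one then has $f(t)+g(s)\leq f(s)+g(s)=c=f(t)+g(t)$, and cancelling the compact element $f(t)$ gives $g(s)\leq g(t)$, while $g(t)\ll g(s)$ gives the reverse inequality; so $g$ is constant on $U_t$, hence locally constant, hence constant by connectedness of $X$. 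A constant compact-valued function is a compact element of $\Lsc(X,S)$, contradicting $g\in\Lsc_{\mathrm{nc}}(X,S)$; the same applies to $f$. This local-to-global step is the substance of the lemma and is absent from your plan.
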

\begin{proof}
Let $f,g\in\Lsc_{\mathrm{nc}}(X,S)$ and assume that $f+g$ is
compact. The arguments in \cite[Corollary 3.8]{aps} show that there
is a compact element $c\in S$ such that $(f+g)(t)=c$ for every $t\in
X$. By our assumptions on $S$, it follows that $f(t)$ and $g(t)$ are
compact for every $t\in X$.

Using that $f(t)\ll f(t)$ and $g(t)\ll g(t)$, and that $f$ and $g$
are lower semicontinuous, find a neighbourhood $U_t$ of $t$ such
that $f(t)\ll f(s)$ and $g(t)\ll g(s)$ for every $s\in U_t$ (see,
e.g. \cite[Lemma 5.1]{aps}). It then follows that
\[
f(t)+g(s)\ll f(s)+g(s)=c=f(t)+g(t)\,.
\]
By cancellation of compact elements, $g(s)\leq g(t)\leq g(s)$ in
$U_t$, so that $g$ is constant in a neighbourhood of $t$. Since $X$
is connected, it follows that $g$ is constant. Likewise, $f$ is
constant.
\end{proof}
When $S$ as above comes as a Cuntz semigroup of a C$^*$-algebra,
then it satisfies the additional axiom of having an ``almost
algebraic order'' (see \cite[Lemma 7.1 (i)]{Rordam-Winter}, and also
\cite{robertfunct}): if $x\leq y$ and $x'\ll x$, then there is $z\in
S$ such that $x'+z\leq y\leq x+z$. One can then prove that, if such
an $S$ has moreover cancellation of compact elements, then the set
$S_{\mathrm{nc}}$ of non-compact elements is a subsemigroup of $S$.
Indeed, if $x+y$ is compact, choose $x'\ll x''\ll x$ such that
$x'+y=x''+y=x+y$. By the almost algebraic order axiom, there is
$z\in S$ with $x'+z\leq x\leq x''+z$. Adding $y$ to this inequality
yields $(x+y)+z\leq x+y$, and since $x+y$ is compact, it follows
that $z=0$, and this implies that $x\leq x''\ll x$.

For a simple, separable C$^*$-algebra with stable rank one, consider
the semigroup
\[
\V(\CC(\T,A))\sqcup\Lsc_{\mathrm{nc}}(\T,\Cu(A))\,,
\]
equipped with addition that extends both the natural operations in
both components, and with
\[
x+f=\hat{x}+f\,,\text{ whenever }x\in \V(\CC(\T,A))\text{ and
}f\in\Lsc_{\mathrm{nc}}(\T,\Cu(A))\,.
\]
We can order this semigroup by taking the algebraic ordering in
$\V(\CC(\T,A))$, the pointwise ordering on
$\Lsc_{\mathrm{nc}}(\T,\Cu(A))$, and we order mixed terms as
follows:
\begin{enumerate}[{\rm (i)}]
\item $f\leq x$ if $f(t)\leq \hat{x}(t)$ for every $t\in \T$.
\item $x\leq f$ if there is $g\in\Lsc_{\mathrm{nc}}(\T,\Cu(A))$ such that
$\hat{x}+g=f$.
\end{enumerate}
That this ordering is transitive is not entirely trivial, but it
follows from the arguments in Theorem \ref{thm:circle} below.

We may now define an ordered map in the category of semigroups:
\[
\begin{array}{ccc} \alpha\colon \Cu_{\T}(A) & \longrightarrow &
\V(C(\T,A))\sqcup \Lsc_{\mathrm{nc}}(\T,\Cu(A)) \\
\mbox{} x & \longmapsto & \left\{ \begin{array}{lr}  x & \text{ if }
x\in
\V(\CC(\T,A))   \\
\hat{x} & \text{ otherwise }
\end{array}\right.\end{array}
\]

%Where the right hand side semigroup is described as the amalgamated
%union of $\V(C(S^1,A))$ and $\Lsc(S^1,\Cu(A))$ identifying $\V(A)$ in
%both sets respectively as classes of projections which are constant
%maps, and constant maps with values in a class of a projection.
%Order and addition are defined ...

\begin{theorem}
\label{thm:circle} If $A$ is a simple C$^*$-algebra with stable rank
one, then there is an order-isomorphism
\[
\Cu_{\T}(A)\cong \V(\CC(\T,A))\sqcup\Lsc_{\mathrm{nc}}(\T,\Cu(A))\,.
\]
\end{theorem}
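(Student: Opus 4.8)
The plan is to prove that the map $\alpha$ displayed above is a bijection which reflects the order; together with the (easy) fact that $\alpha$ preserves the order, this makes $\alpha$ an order-isomorphism, and then the relation on $\V(\CC(\T,A))\sqcup\Lsc_{\mathrm{nc}}(\T,\Cu(A))$ introduced earlier, being the pushforward under $\alpha$ of the order on $\Cu_{\T}(A)$, is in particular transitive, as was asserted. Throughout we assume $A$ stable, and we use freely that $\Cu_{\T}(A)=\Cu(\CC(\T,A))$ has cancellation of compact elements and the almost algebraic order, and that its compact elements are precisely the classes of projections. First, $\alpha$ is well defined, i.e.\ it sends non-compact classes to non-compact functions. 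One implication holds because the natural map $\Cu_{\T}(A)\to\Lsc(\T,\Cu(A))$ preserves $\ll$. For the other, suppose $\hat x$ is the constant function with value $[q]\in\V(A)$ and that a representative $f$ of $x$ is not Cuntz equivalent to a projection; then $0$ is not isolated in $\sigma(f)$, so there is a sequence $t_n\to t_0$ carrying nonzero spectral values of $f(t_n)$ tending to $0$, and comparison of the corresponding spectral projections of $f(t_n)$ with the support projection of $f(t_0)$ — using norm-continuity of spectral projections at a threshold in the resolvent set of $f(t_0)$, together with cancellation of compact elements — forces those spectral projections to be $0$, which is absurd; hence $f$ is equivalent to a projection and $x$ is compact.

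\smallskip

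\emph{Injectivity.} On the summand $\V(\CC(\T,A))$ the map is the identity. If $x,y\in\Cu_{\T}(A)_{\mathrm{nc}}$ satisfy $\hat x=\hat y$, choose representatives $f$ and $g$, neither equivalent to a projection; then $f(t)\precsim g(t)$ and $g(t)\precsim f(t)$ for all $t$, so Proposition~\ref{prop:generalcase} gives $f\precsim g$ and $g\precsim f$, that is, $x=y$. Finally, a compact element and a non-compact one cannot have the same image, as by the previous paragraph the image of the latter lies in $\Lsc_{\mathrm{nc}}(\T,\Cu(A))$.

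\smallskip

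\emph{Surjectivity.} The copy of $\V(\CC(\T,A))$ is in the image, so fix $\phi\in\Lsc_{\mathrm{nc}}(\T,\Cu(A))$; it suffices to produce $f\in\CC(\T,A)_+$ with $\hat f=\phi$, since then $[f]$ is non-compact and $\alpha([f])=\phi$. If $\phi$ vanishes at some point $t_0$, let $\varphi\colon[0,1]\to\T$ send $0$ and $1$ to $t_0$; by Theorem~\ref{thm:interval} we have $\phi\circ\varphi=\hat h$ for some $h\in\CC([0,1],A)_+$, and $[h(0)]=\phi(t_0)=0=[h(1)]$ forces $h(0)=h(1)=0$, so $h$ descends to the required $f$. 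The substantive case is $\phi$ nowhere zero. Cover $\T$ by two closed arcs $I_1,I_2$, each homeomorphic to $[0,1]$, whose intersection is a disjoint union of two closed arcs. By Theorem~\ref{thm:interval} there are $f_i\in\CC(I_i,A)_+$ with $\widehat{f_i}=\phi|_{I_i}$, and, again by Theorem~\ref{thm:interval} applied to each component of $I_1\cap I_2$, $f_1$ and $f_2$ are Cuntz equivalent on the overlap. One then patches $f_1$ and $f_2$ into a single $f\in\CC(\T,A)$ with $\hat f=\phi$, along the lines of the proof of Proposition~\ref{prop:noprojectioncase}: using Theorem~\ref{thm:interval} one obtains, near the ends of the two overlap arcs, elements $y_j$ with $y_jy_j^*\in\Her(f_i)$ implementing approximate conjugacies between $f_1$ and $f_2$; the relevant $\mathrm{K}_1$-classes can be adjusted freely, since this does not affect $\widehat{f_i}$, so that the compatibility hypothesis of Proposition~\ref{pujarunitari} is satisfied; one lifts the resulting fibrewise unitaries using Proposition~\ref{pujarunitari}; and one glues with a partition of unity. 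This last patching is the main obstacle, and it is exactly where stable rank one and Proposition~\ref{pujarunitari} enter.

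\smallskip

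\emph{Order reflection and preservation.} Suppose $\alpha(x)\le\alpha(y)$. If $x,y$ are both compact this is the algebraic order on $\V(\CC(\T,A))$, which agrees with the order inherited from $\Cu_{\T}(A)$ by cancellation, so $x\le y$. If $x$ is non-compact, then $\alpha(x)\le\alpha(y)$ unwinds — whether $y$ is non-compact, or compact and viewed as a projection-valued element of $\CC(\T,A)$ — to $\hat x(t)\le\hat y(t)$ for every $t$, so taking a representative $f$ of $x$ not equivalent to a projection and a representative of $y$, Proposition~\ref{prop:generalcase} gives $x\le y$. If $x$ is compact and $y$ is non-compact, then by definition $\hat x+h=\hat y$ for some $h\in\Lsc_{\mathrm{nc}}(\T,\Cu(A))$; by surjectivity $h=\hat g$ with $[g]$ non-compact, so with $p$ a projection representing $x$ one has $\widehat{p+g}=\hat y$, and $p+g$ is not equivalent to a projection (else $[g]$ would be compact by cancellation of compacts), whence $[p+g]=y$ by injectivity on the non-compact part, and $x\le y$. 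The converse, $x\le y\Rightarrow\alpha(x)\le\alpha(y)$, is similar and easier: when $x,y$ have the same type it is immediate from $\alpha$ being a morphism in $\Cu$, and when $x$ is compact and $y$ non-compact one uses the almost algebraic order to write $y=x+z$, with $z$ forced to be non-compact, so that $\hat y=\hat x+\hat z$ exhibits $\alpha(x)\le\alpha(y)$. Thus $\alpha$ is an order-isomorphism, and the transitivity of the order on the target follows as noted at the outset.
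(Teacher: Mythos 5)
Your order-embedding argument is essentially the paper's: Proposition~\ref{prop:generalcase} handles every case in which the smaller element is non-compact, and the mixed case ``compact $\le$ non-compact'' is resolved by lifting the witness $h\in\Lsc_{\mathrm{nc}}(\T,\Cu(A))$ to some $[g]$ and invoking injectivity on the non-compact part. Your extra care about well-definedness and transitivity is legitimate (the paper glosses over both), but the spectral-projection argument you give for ``$\hat x$ compact $\Rightarrow x$ compact'' is shaky as written: whether $\chi_{(0,\delta]}(f(t_n))$ is even a projection, and whether the threshold $\delta$ stays in the resolvent set along the sequence, is not controlled. There is a cleaner route entirely inside the tools you already use: if $\hat f$ is compact then $\hat f\le\widehat{(f-1/n)_+}$ pointwise for some $n$ (apply $\ll$ to the increasing sequence $\widehat{(f-1/n)_+}$), and if $f$ were not equivalent to a projection, Proposition~\ref{prop:generalcase} applied to the pair $f,(f-1/n)_+$ would give $[f]\le[(f-1/n)_+]\ll[f]$, making $[f]$ compact after all.

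The genuine gap is in surjectivity. The paper obtains it by presenting $\CC(\T,A)$ as a pullback of $\CC([0,1],A)$ and $A$ over $A\oplus A$ and quoting \cite[Theorem 3.3]{aps}, which applies precisely because Theorem~\ref{thm:interval} makes the interval map an order-embedding; all of the gluing difficulty is absorbed there. Your replacement --- lift $\phi$ on two arcs via Theorem~\ref{thm:interval} and patch by hand --- defers the patching to ``the lines of Proposition~\ref{prop:noprojectioncase}'', but the $\mathrm{K}_1$-correction mechanism in that proof is Lemma~\ref{lem:spectrum}, which requires an arbitrarily small nonzero point of $\sigma(y_1y_1^*)$, i.e.\ it requires the element being corrected not to be equivalent to a projection. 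On the overlap arcs the local lifts $f_i$ may well be equivalent to projections even though $\phi$ is globally non-compact: take $\phi=[p]$ on $\T\setminus\{1\}$ and $\phi(1)=[p']$ with $[p']<[p]$, which is lower semicontinuous, nowhere zero and non-compact, yet constant with compact value on every overlap arc of any two-arc cover. In that situation ``the relevant $\mathrm{K}_1$-classes can be adjusted freely, since this does not affect $\widehat{f_i}$'' is exactly the assertion that needs proof, and the tool you point to does not supply it. Either carry out the gluing with the projection-valued case treated separately (cancellation of projections and a direct unitary twist of $f_2$ over its whole arc can be made to work, but this must be written out), or do as the paper does and invoke the pullback theorem.
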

\begin{proof}
We will show that the map $\alpha$ just defined is a surjective
order-embedding.

First note that $\CC(\T,A)$ is the following pullback
\[
\xymatrix
{\CC(\T,A)\ar[d]\ar[r]^{\mathrm{ev}_1} & A\ar[d] \\
\CC([0,1],A)\ar[r]^{\mathrm{ev}_{0,1}} & A\oplus A}
\]
Since, by Theorem \ref{thm:interval}, the natural map
$\Cu(\CC([0,1],A)\to\Lsc([0,1],\Cu(A))$ is an order-embedding, we
may use \cite[Theorem 3.3]{aps} to conclude that the pullback map
\[
\Cu_{\T}(A)\to \Cu(\CC([0,1],A))\oplus_{\Cu(A\oplus A)}\Cu(A)
\]
is a surjective map in the category Cu. Upon identifying
$\Cu(\CC([0,1],A))\oplus_{\Cu(A\oplus A)}\Cu(A)$ with
$\Lsc(\T,\Cu(A))$, we obtain that the map
\[
\Cu_{\T}(A)\to\Lsc(\T,\Cu(A))\text{, given by } x\mapsto\hat{x}\,,
\]
is also surjective. This implies in particular that the map $\alpha$
is surjective.

To prove that $\alpha$ is an order-embedding, let
$x,y\in\Cu_{\T}(A)$ and assume that $\alpha (x)\leq\alpha(y)$. There
is nothing to prove if $x,y\in \V(\CC(\T,A))$.

If $x\notin \V(\CC(\T,A))$, then write $x=[f]$, $y=[g]$, and our
assumption just means that $f(t)\precsim g(t)$ for every $t\in \T$.
We may then apply Proposition \ref{prop:generalcase} to conclude
that $f\precsim g$.

Finally, assume that $x\in \V(\CC(\T,A))$ and $y\notin
\V(\CC(\T,A))$. Then $\alpha(x)\leq \alpha(y)$ means, by definition,
that there is $g\in\Lsc_{\mathrm{nc}}(\T,\Cu(A))$ with
$\hat{x}+g=\hat{y}$. Let $z\in\Cu_{\T}(A)$ be such that $\hat{z}=g$.
Then
\[
(x+z)^{\hat{}}=\hat{x}+g=\hat{y}\,.
\]
Note that $x+z\notin \V(\CC(\T,A))$, as otherwise $(x+z)^{\hat{}}$
would be a compact element in $\Lsc (\T,\Cu(A))$. By Lemma
\ref{lem:nc} (or rather, its proof -- see also Remark
\ref{rem:compact}), $g=\hat{z}$ would be constant (and compact), a
contradiction.

The argument in the previous paragraph then shows that $x+z=y$, as
wanted.
\end{proof}

\begin{theorem}
\label{thm:zstable} Let $A$ be a separable, finite
$\mathcal{Z}$-stable C$^*$-algebra. Then, there is an
order-isomorphism
\[
\Cu_{\T}(A)\cong (\{0\}\sqcup (\V(A)^*\times
\mathrm{K}_1(A)))\sqcup\Lsc_{\mathrm{nc}}(\T,\Cu(A))\,,
\]
where $\V(A)^*=\V(A)\setminus\{0\}$.
\end{theorem}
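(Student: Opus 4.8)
The statement is an identification of $\Cu_\T(A)$ for a separable, finite, $\mathcal{Z}$-stable $A$, and the route is to feed Theorem~\ref{thm:circle} the extra structural information that $\mathcal{Z}$-stability (plus finiteness) provides. Recall that $\mathcal{Z}$-stable C$^*$-algebras have stable rank one when finite (by work of R\o rdam), so Theorem~\ref{thm:circle} applies and gives
\[
\Cu_\T(A)\cong \V(\CC(\T,A))\sqcup\Lsc_{\mathrm{nc}}(\T,\Cu(A))\,.
\]
So the entire content is to show that $\V(\CC(\T,A))$ is order-isomorphic, as a monoid, to $\{0\}\sqcup(\V(A)^*\times \mathrm{K}_1(A))$, in a way compatible with the mixed-order relations of Theorem~\ref{thm:circle} (which only involve the image $\hat{x}$, so compatibility will be automatic once we track it). Here $\T$ is connected, so by Remark~\ref{rem:compact} every projection in $\CC(\T,A)$ (up to Cuntz equivalence, i.e.\ up to Murray--von Neumann equivalence after stabilizing) is either zero or nowhere zero, which already explains the $\{0\}\sqcup(\cdots)$ shape.

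**Key steps.** First I would recall the classical computation of $\V$ of a circle algebra: for a unital $A$ one has $K_0(\CC(\T,A))\cong K_0(A)\oplus K_1(A)$ via the Bott/Pimsner--Voiculescu splitting, with the summand $K_1(A)$ coming from the ``winding'' part. The point is to upgrade this to the \emph{positive} monoid $\V$. Since $A$ is $\mathcal{Z}$-stable and finite, $\V(A)$ has cancellation and $\V(A)$ embeds in $K_0(A)$ with $\V(A)=\{x\in K_0(A)^+ : \cdots\}$; more importantly, every projection in $M_\infty(\CC(\T,A))$ is homotopic to one of a standard form. Concretely, I would argue that a nonzero projection $P\in M_n(\CC(\T,A))$ has constant rank (by connectedness of $\T$ and Remark~\ref{rem:compact}), giving a class $[P(1)]\in\V(A)^*$; and then, having fixed that class $v=[P(1)]$, the projections with $P(1)\sim v$ are classified up to Murray--von Neumann equivalence in $\CC(\T,A)$ by an element of $K_1(vAv)\cong K_1(A)$ (using that $A$, hence $vAv$, has cancellation and stable rank one, so the clutching-function invariant is a homotopy-complete invariant — this is where $\mathcal{Z}$-stability does its real work, guaranteeing the absence of $K_1$-obstructions and full cancellation). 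The monoid structure: $([P(1)],c_P)+([Q(1)],c_Q)=([P(1)]+[Q(1)],c_P+c_Q)$, which matches the product monoid structure on $\V(A)^*\times K_1(A)$, with $0$ adjoined as the class of the zero projection (and the sum of $0$ with anything in $\V(A)^*\times K_1(A)$ landing back in $\V(A)^*\times K_1(A)$ — note $\V(A)^*$ is closed under addition since $A$ is, so the bijection is a monoid isomorphism). Finally I would check the mixed order relations of Theorem~\ref{thm:circle} transport correctly: since the relations $f\le x$ and $x\le f$ are phrased purely through $\hat{x}$, and $\hat{x}$ for $x=([v],c)$ is the constant function $v$, the identification is automatic; the order within $\V(\CC(\T,A))$ is the algebraic one, which corresponds to the algebraic order on $\V(A)^*$ in the first coordinate together with the (trivial, group) structure in the $K_1$ coordinate.

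**Main obstacle.** The crux — and the step I'd spend the most care on — is the classification of projections over $\CC(\T,A)$ by $\V(A)^*\times K_1(A)$, i.e.\ showing that the clutching invariant in $K_1(vAv)$ is a \emph{complete} Murray--von Neumann invariant, not merely a $K_0$-level one. For the circle this is classical when $A=\C$ (vector bundles over $S^1$ are trivial, and one gets $K_1$ from a different mechanism — actually for $A=\C$ the $K_1(A)=0$, consistent with $\V(\CC(\T))=\N$), but for general $\mathcal{Z}$-stable $A$ one needs: (a) every projection is a clutching of the trivial bundle $[0,1]\times vAv$ by a unitary in $vAv^\sim$, which uses that projections over an interval are trivial (stable rank one / the argument underlying Theorem~\ref{thm:interval}); (b) two such are Murray--von Neumann equivalent in $\CC(\T,A)$ iff the clutching unitaries are connected in $\mathrm{U}(M_\infty(vAv^\sim))$, which is exactly $K_1$; and (c) the rank data $[P(1)]\in\V(A)$ is unconstrained beyond being nonzero, using that $A$ is stable (or stabilizing) so every $v\in\V(A)^*$ is realized. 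The subtlety in (b) is that an equivalence in $\CC(\T,A)$ may not respect the fibrewise identification used to define the clutching function, so one must check the invariant is well-defined independent of trivialization — again a consequence of cancellation and $\pi_0(\mathrm{U})$ being a group, both supplied by $\mathcal{Z}$-stability. Once (a)--(c) are in place, assembling the monoid isomorphism and checking order-compatibility is routine.
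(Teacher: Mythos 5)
Your proposal is correct and follows essentially the same route as the paper: both reduce via Theorem~\ref{thm:circle} to identifying $\V(\CC(\T,A))$ with $\{0\}\sqcup(\V(A)^*\times\mathrm{K}_1(A))$, with the decisive input being cancellation of (full) projections in $\CC(\T,A)$ supplied by $\mathcal Z$-stability (Jiang's theorem) together with the observation from Remark~\ref{rem:compact} that nonzero projections over the connected space $\T$ are nowhere zero, hence full by simplicity. The only difference is that the paper cites \cite{Tikuisis} for the clutching-function computation of $\V(\CC(\T,A))$, whereas you sketch that argument explicitly; note also that simplicity of $A$, though omitted from the statement, is used by both arguments (for Theorem~\ref{thm:circle}, for stable rank one, and for fullness of projections).
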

\begin{proof}
 By Theorem \ref{thm:circle}, we only need to show that $\V(\CC(\T,A))\cong
\{0\}\sqcup (\V(A)^*\times \mathrm{K}_1(A))$. This follows once we
notice that $\CC(\T,A))$ has cancellation of projections (see, e.g.
\cite{Tikuisis}). Since $A$ is $\mathcal{Z}$-stable, then
$\CC(\T,A)$ is also $\mathcal Z$-stable, whence $\CC(\T,A)$ has
cancellation of full projections by \cite[Theorem 1]{jiang}. We have
already observed (see Remark \ref{rem:compact}) that every
projection in (matrices over) $\CC(\T,A)$ is either identically zero
or always non-zero, and in that case it is a full projection as $A$ is simple, by an application of \cite[Lemma 10.4.2]{dix}.
\end{proof}

\begin{remark}
{\rm In light of these results, one might expect that the same
description of the Cuntz semigroup will hold for more general spaces
(of dimension at most $1$). However, the following example provided by N. C. Phillips shows that this is not the case.

Let $A$ be a simple C$^*$-algebra with stable rank one,
$\mathrm{K}_1(A)\neq 0$ and such that $\V(\CC(\T,A))\cong
\{0\}\sqcup \V(A)^*\times \mathrm{K}_1(A)$ (for example, $A$ could
be $\mathcal{Z}$-stable as above). Let $X=\T\cup[1,2]$, and take
$f'$, $g'\in\CC(\T,A)$ be elements such that $f'(t)\sim g'(t)$ for
all $t\in \T$, yet $f'$ and $g'$ are not comparable. For example, we
could take a non-zero element $[p]\in \V(A)^*$, a non-trivial class
$[u]\in \mathrm{K}_1(A)$, and $f'$ corresponding to $([p],[1])$ and
$g'$ corresponding to $([p],[u])$. Define $f,g\in \CC(X,A)$ as $f',
g'$ over $\T$, and $f(t)=(2-t)f(1)$, $g(t)=g(1)$ for $t\in [1,2]$.
Then clearly $f(t)\precsim g(t)$ for all $t\in X$, but
$f\not\precsim g$.

}
\end{remark}

%%%%%Category approach

\section{A categorical approach}

As already shown in \cite{Tikuisis}, the Elliott invariant and
the invariant defined by $\Cu_{\T}(-)$ are equivalent in a
functorial way, for simple, unital non-type I ASH algebras with slow
dimension growth. Because of Theorem \ref{thm:zstable}, this is
actually true in the more general setting of separable $\mathcal
Z$-stable, simple C$^*$-algebras with stable rank one. Our aim in
this section is to develop a (somewhat) abstract approach that makes
the functorial equivalence explicit, thus also proving the Theorem
announced in the Introduction.

Let $S$ be a semigroup in Cu. Assume that the subset
$S_{\mathrm{nc}}$ of non-compact elements is an absorbing
subsemigroup, in the sense that $S_{nc}+S\subseteq S_{\mathrm{nc}}$.
Denote by $S_{\mathrm{c}}$ the subsemigroup of compact elements and
$S_\mathrm{c}^*=S_\mathrm{c}\setminus\{0\}$. Let $G$ be an abelian
group and consider the semigroup
\[
S_G=(\{0\}\sqcup (G\times S_{\mathrm{c}}^*))\sqcup
S_{\mathrm{nc}}\,,
\]
with natural operations in both components, and $(g,x)+y=x+y$
whenever $x\in S_{\mathrm{c}}^*$, $y\in S_{\mathrm{nc}}$, and $g\in
G$. This semigroup can be ordered by
\begin{enumerate}[{\rm (i)}]
\item For $x,y\in S_{\mathrm{c}}^*$, and $g,h\in G$, $(g,x)\leq (h,y)$ if and only if $x=y$ and $g=h$, or else
$x<y$.
\item For $x\in S_{\mathrm{c}}^*$, $y\in S_{\mathrm{nc}}$, $g\in
G$, $(g,x)$ is comparable with $y$ if $x$ is comparable with $y$.
\end{enumerate}
The proof of the following lemma is rather straightforward, hence we
omit the details.
\begin{lemma}
\label{lem:tontada} Let $S$ be an object of Cu such that
$S_{\mathrm{nc}}$ is an absorbing subsemigroup. If $G$ is an abelian
group, then $S_G$ is also an object of Cu.
\end{lemma}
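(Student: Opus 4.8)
The plan is to verify the four defining axioms of the category Cu for $S_G$ one at a time, reducing each to the corresponding property already known for $S$ and for the group $G$ (which carries no order structure and is "compact'' in a trivial sense once embedded via the $G\times S_{\mathrm c}^*$ component). First I would unpack the order: the key structural point, which I would isolate at the outset, is that $S_G$ decomposes as the disjoint union of $\{0\}\sqcup(G\times S_{\mathrm c}^*)$ and $S_{\mathrm{nc}}$, that $S_{\mathrm{nc}}$ is an order ideal in the sense that anything below an element of $S_{\mathrm{nc}}$ is either in $S_{\mathrm{nc}}$ or of the form $(g,x)$ with $x\in S_{\mathrm c}^*$, and that an increasing sequence either lies eventually in $G\times S_{\mathrm c}^*$ with a single fixed group coordinate, or else enters $S_{\mathrm{nc}}$ and stays there. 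I would record these facts as an explicit preliminary observation, since every axiom check uses them.

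Next I would check suprema of increasing sequences. Given an increasing sequence $(a_n)$ in $S_G$, by the preliminary observation there are two cases. If the sequence stays in $\{0\}\sqcup(G\times S_{\mathrm c}^*)$: from the order in (i), once it leaves $0$ the group coordinate is constant, say $g$, and the $S_{\mathrm c}^*$-coordinates form an increasing sequence $x_n$ in $S$; its supremum $x=\sup x_n$ exists in $S$, and either $x=x_n$ for large $n$ (so the supremum of $(a_n)$ is $(g,x)$) or $x>x_n$ for all $n$, in which case $x$ is non-compact (a compact $x$ would be reached) and $x=\sup a_n$ computed in $S_{\mathrm{nc}}$. If the sequence eventually enters $S_{\mathrm{nc}}$, the tail is an increasing sequence there, with supremum $x\in S_{\mathrm{nc}}$, and one checks $x=\sup a_n$ in $S_G$ using that the finitely many initial terms of the form $(g,x_i)$ satisfy $x_i\le x$. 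For axiom (ii), every non-compact element is already a supremum of a rapidly increasing sequence in $S_{\mathrm{nc}}\subseteq S_G$ (compactness is tested against increasing sequences, and $\ll$ in $S_G$ restricted to $S_{\mathrm{nc}}$ agrees with $\ll$ in $S$ by the case analysis above); for $(g,x)$ with $x\in S_{\mathrm c}^*$ compact, $(g,x)\ll(g,x)$ directly from the order description, since any increasing sequence with supremum $\ge(g,x)$ must have a term equal to $(g,x)$; and $0$ is the supremum of the constant sequence. Thus (i) and (ii) hold.

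Finally I would verify that addition is compatible with suprema and with $\ll$. Here the main obstacle is bookkeeping across the "mixed'' addition rule $(g,x)+y=x+y$ when $y\in S_{\mathrm{nc}}$: one must confirm associativity and order-preservation of $+$ on $S_G$ (order-preservation being part of what "$S_G$ is an object of Cu'' means, even though the lemma statement phrases it as membership in the category), using the hypothesis that $S_{\mathrm{nc}}$ is absorbing so that any sum landing partly in $S_{\mathrm{nc}}$ lands entirely there. For compatibility with suprema, given increasing $(a_n)$, $(b_n)$ with suprema $a$, $b$, one splits into the (finitely many) cases according to where $a$ and $b$ sit; when both stay in $G\times S_{\mathrm c}^*$ with the same group coordinates the computation reduces to $\sup(x_n+x_n')=\sup x_n+\sup x_n'$ in $S$ together with the observation that the group coordinate of the sum is $g+h$ throughout; when either supremum is non-compact, absorption forces $a+b\in S_{\mathrm{nc}}$ and the identity reduces to the one in $S$. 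For $\ll$: if $a\ll a'$ and $b\ll b'$ in $S_G$, one again case-splits; the only genuinely new point is that when, say, $a=(g,x)$ and $b\in S_{\mathrm{nc}}$, one has $x\ll x'$ and $b\ll b'$ in $S$, hence $x+b\ll x'+b'$ in $S$, i.e. $a+b\ll a'+b'$ in $S_G$; the purely compact case uses that a sum of compact elements staying compact is reached by any increasing sequence passing it. Assembling these cases gives axiom (iii), completing the proof; as the statement advertises, none of the individual verifications is hard, so I would present the case analysis compactly rather than in full detail.
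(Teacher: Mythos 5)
The paper offers no proof of this lemma at all (it is explicitly omitted as ``rather straightforward''), so there is nothing to compare against except the intended strategy, and yours --- direct verification of the Cu axioms by case analysis on the two components, reducing everything to the corresponding property of $S$ --- is certainly the right one. However, the ``preliminary observation'' that you say every axiom check uses is false as stated, in two ways. First, an increasing sequence in $G\times S_{\mathrm c}^*$ need not have constant group coordinate: $(g,x)\le(h,y)$ holds whenever $x<y$ with $g,h$ arbitrary, so the group coordinate is only forced to stabilize once the $S$-coordinates do. Second, an increasing sequence that enters $S_{\mathrm{nc}}$ need not stay there, since $y\le(g,x)$ is permitted (by mixed-order rule (ii)) whenever $y\le x$ in $S$. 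The correct dichotomy is that either the images under the forgetful map $\pi\colon S_G\to S$ stabilize (and then the group coordinate stabilizes too, giving a compact supremum in $G\times S_{\mathrm c}^*$), or they do not, in which case $\sup_n\pi(a_n)$ is necessarily non-compact (a compact supremum would be attained) and is the supremum in $S_G$. Your later case analysis essentially contains this repair, so the suprema axiom survives, but the observation should be corrected rather than recorded in its false form.

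The more serious slip is in axiom (ii): it is not true that every non-compact element of $S$ is the supremum of a rapidly increasing sequence lying in $S_{\mathrm{nc}}$. Take $S=\N\cup\{\infty\}$ with $S_{\mathrm{nc}}=\{\infty\}$ (which is absorbing): the only approximants of $\infty$ are compact. The correct argument writes $y=\sup y_n$ with $y_n\ll y_{n+1}$ in $S$ and lifts each compact $y_n$ to $(0,y_n)\in G\times S_{\mathrm c}^*$, after which one checks that $\ll$ is preserved across components. Relatedly, your justification of $(g,x)\ll(g,x)$ --- that any increasing sequence with supremum $\ge(g,x)$ ``must have a term equal to $(g,x)$'' --- is wrong: if the supremum is non-compact or strictly larger, no term need equal $(g,x)$; one shows instead that some term \emph{dominates} $(g,x)$, and when the candidate term is $(h,z)$ with $z=x$ but $h\ne g$ one must pass to a later term via the stabilization argument above. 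Finally, when you ``confirm order-preservation of $+$'', the case $(g,x)\le(g',x')$ via $x<x'$ added to $(h,y)$ requires $x+y<x'+y$ (otherwise the sums are incomparable unless the group coordinates happen to agree); this needs cancellation of compact elements, which holds in all the paper's applications but is not among the lemma's stated hypotheses and deserves to be flagged explicitly.
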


As in \cite{pertoms}, let us write $\mathcal{I}$ to denote the
category whose objects are $4$-tu\-ples
\[
I=((G_0,G_0^+,u),G_1,X,r)\,,
\]
where $(G_0,G_0^+,u)$ is a (countable) simple partially ordered
abelian group with order-unit $u$, $G_1$ is a (countable) abelian
group, $X$ is a (metrizable) Choquet simplex, and $r\colon X\to
\mathrm{S}(G_0,u)$ is an affine map, where $\mathrm{S}(G_0,u)$
denotes the state space of $(G_0,u)$.

Maps between objects $((G_0,G_0^+,u),G_1,X,r)$ and
$((H_0,H_0^+,v),H_1,Y,s)$ of $\mathcal I$ are described as
$3$-tuples $(\theta_0,\theta_1, \gamma)$, where $\theta_0$ is a
morphism of ordered groups with order unit, $\theta_1$ is a morphism
of abelian groups, and $\gamma\colon Y\to X$ is an affine and
continuous map such that $r\circ\gamma=\theta_0^*\circ s$, where
$\theta_0^*\colon \mathrm{S}(H_0,v)\to \mathrm{S}(G_0,u)$ is the
naturally induced map.

Let $\mathcal{C}_s$ denote the class of simple, unital, separable
and nuclear C$^*$-algebras. Then, the Elliott invariant defines a
functor
\[
\mathrm{Ell}\colon\mathcal{C}_s\to \mathcal I
\]
by
\[
\mathrm{Ell}(A)=((\K_0(A), \K_0(A)^+, [1_A]), \K_1(A),
\mathrm{T}(A),r)\,,
\]
where $\mathrm{T}(A)$ is the trace simplex and $r$ is the pairing
between $\K$-Theory and traces.

Let us define a functor
\[
F\colon \mathcal{I}\to \Cu
\]
as follows. If $I=((G_0,G_0^+,u),G_1,X,r)$ is an object of
$\mathcal{I}$, set
\[
F(I)=(\{0\}\sqcup (G_1\times G_0^{++}))\sqcup\Lsc_{\mathrm{nc}}(\T,
G_0^+\sqcup \mathrm{LAff}(X)^{++})\,,
\]
where $G_0^{++}=G_0^+\setminus\{0\}$.

Since $G_0^+\sqcup \mathrm{LAff}(X)^{++}$ is an object of Cu (see,
e.g. \cite[Lemma 6.3]{abp}), it follows from Lemma \ref{lem:tontada}
above that $F(I)$ is also an object of Cu. (The addition on
$G_0^+\sqcup \mathrm{LAff}(X)^{++}$ is given by
$(g+f)(x)=r(x)(g)+f(x)$, where $g\in G$, $f\in\mathrm{LAff}(X)$ and
$x\in X$.)

That $F$ is a functor follows almost by definition. The only
non-trivial detail that needs to be checked is that, if
\[
(\theta_0,\theta_1,\gamma)\colon((G_0,G_0^+,u),G_1,X,r)\to((H_0,H_0^+,v),H_1,Y,s)
\]
is a morphism in $\mathcal{I}$ and $f\colon\T\to
G_0^+\sqcup\mathrm{LAff} (X)^{++}$ is non-compact, then
$(\theta_0\sqcup\gamma^*)\circ f\colon\T\to
H_0^+\sqcup\mathrm{LAff}(Y)^{++}$ is also non-compact. Here
\[
\theta_0\sqcup\gamma^*\colon G_0^+\sqcup\mathrm{LAff}(X)^{++}\to
H_0^+\sqcup\mathrm{LAff}(Y)^{++}
\]
is defined as $\theta_0$ on $G_0^+$ and $\gamma^*$ on
$\mathrm{LAff}(X)^{++}$.

If $(\theta_0\sqcup\gamma^*)\circ f$ is compact, then there is $h\in
H_0^+$ such that $\theta_0(f(\T))=\{h\}$ and $f(\T)\subseteq G_0^+$.
As $f$ is non-compact and lower semicontinuous, there are $s,t\in
\T$ with $f(t)<f(s)$, whence $f(s)-f(t)\in G_0^{++}$ is an
order-unit. Thus, there exists $n\in\N$ with $f(s)\leq
n(f(s)-f(t))$. After applying $\theta_0$, we obtain that $h\leq 0$,
so that $h=0$. But this is not possible since, as $f$ is not
constant, it takes some non-zero value $a$, which will be an
order-unit with $\theta_0(a)=0$, contradicting that $\theta_0(u)=v$.

Let us show that $F\colon\mathcal{I}\to F(\mathcal I)$ is a full,
faithful and dense functor, so it yields an equivalence of
categories. Therefore, by standard category theory, there exists a
functor $G\colon F(\mathcal I)\to \mathcal I$ such that $F\circ G$
and $G\circ F$ are naturally equivalent to the (respective)
identities.

We only need to prove that $F$ is a faithful functor. If
\[
(\theta_0,\theta_1,\gamma)\colon ((G_0,G_0^+,u),G_1,X,r)\to
((H_0,H_0^+,v),H_1,Y,s)
\]
is a morphism in $\mathcal I$, we shall write
$F((\theta_0,\theta_1,\gamma))=(\theta_1\times\theta_0)\sqcup(\theta_0\sqcup(\gamma^*)_*)$,
where
\[
(\theta_0\sqcup(\gamma^*))_*(f)=(\theta_0\sqcup(\gamma^*))\circ f\,,
\]
for $f\in \Lsc_{\mathrm{nc}}(\T,G_0^+\sqcup\mathrm{LAff}(X)^{++})$.
If now
$F((\theta_0,\theta_1,\gamma))=F((\theta'_0,\theta'_1,\gamma'))$, we
readily see that $\theta_0\times\theta_1=\theta'_0\times\theta'_1$,
whence $\theta_i=\theta'_i$. It also follows that
$\gamma^*(h)=h\circ\gamma=h\circ\gamma'=\gamma'^*(h)$, for every
affine continuous function $h$ on $X$. Since $X$ is homeomorphic to
the state space on $\mathrm{Aff}(X)$ (normalized at the constant
function $1$) via the natural evaluation map $\psi\colon X\to
\mathrm{S}(\mathrm{Aff}(X),1)$ (e.g. \cite[Theorem 7.1]{poag}), the
compositions
%\[
%Y\stackrel{\cong}{\to}
%\mathrm{S}(\mathrm{Aff}(Y),1)\stackrel{\gamma^*,\gamma'^*}{\longrightarrow}
%\mathrm{S}(\mathrm{Aff}(X),1)\stackrel{\cong}{\to} X
%\]
\[
\xymatrix{
 {Y}\ar[r]^-{\cong}
                & {\mathrm{S}(\mathrm{Aff}(Y),1)}\ar@<0.5ex>[r]^-{\gamma^*} \ar@<-0.5ex>[r]_-{\gamma'^{*}}&{{\mathrm{S}(\mathrm{Aff}(X),1)}}\ar[r]^-{\cong}&X}
                \]
yield that $\gamma=\gamma'$.

Assembling our observations above (together with Theorem
\ref{thm:zstable} and \cite[Corollary 5.7]{bpt}), we get the
following:

\begin{theorem}
\label{thm:equivalence} {\rm (Cf. \cite{Tikuisis})} Upon restriction
to the class of unital, simple, separable and finite $\mathcal
Z$-stable algebras, there are natural equivalences of functors
\[
F\circ\mathrm{Ell}\simeq\Cu_{\T}\,\text{ and }\,\mathrm{Ell}\simeq
G\circ\Cu_{\T}\,.
\]
Therefore, for these algebras, $\mathrm{Ell}$ is a classifying
functor if, and only if, so is $\Cu_{\T}$.
\end{theorem}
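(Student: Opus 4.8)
The plan is to establish the first natural equivalence $\Cu_{\T}\simeq F\circ\mathrm{Ell}$ on the class in the statement, then obtain $\mathrm{Ell}\simeq G\circ\Cu_{\T}$ by composing with the quasi-inverse $G$ of the equivalence $F\colon\mathcal I\to F(\mathcal I)$ constructed above, and finally read off the assertion about classifying functors formally; this also makes the equivalence of \cite{Tikuisis} explicit on a larger class.

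First I would fix a unital, simple, separable, finite, $\mathcal Z$-stable C$^*$-algebra $A$ and identify the two objects of Cu pointwise. By \cite[Corollary 5.7]{bpt} there is an isomorphism in Cu
\[
\Cu(A)\cong\K_0(A)^+\sqcup\mathrm{LAff}(\mathrm{T}(A))^{++}
\]
which restricts to $\V(A)\cong\K_0(A)^+$ -- here, as in the proof of Theorem \ref{thm:zstable}, I use that a finite $\mathcal Z$-stable algebra has cancellation of projections, so also $\V(A)^*\cong\K_0(A)^{++}$. Post-composition gives $\Lsc_{\mathrm{nc}}(\T,\Cu(A))\cong\Lsc_{\mathrm{nc}}(\T,\K_0(A)^+\sqcup\mathrm{LAff}(\mathrm{T}(A))^{++})$ in Cu, and substituting these identifications into the order-isomorphism of Theorem \ref{thm:zstable} yields $\Cu_{\T}(A)\cong F(\mathrm{Ell}(A))$.

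Next I would upgrade this to a natural equivalence. Here $\Cu_{\T}=\Cu\circ\CC(\T,-)$ is a functor because $\CC(\T,-)$ and $\Cu$ are, and the order-isomorphism $\alpha$ of Theorem \ref{thm:circle} is assembled from the natural transformation $\Cu(\CC(\T,A))\to\Lsc(\T,\Cu(A))$ of \cite{aps} together with the natural identification of the Murray-von Neumann semigroup of projections, so $\alpha$ is natural in $A$. The isomorphism of \cite[Corollary 5.7]{bpt} is also natural: for a unital $^*$-homomorphism $\phi\colon A\to B$, with $(\theta_0,\theta_1,\gamma)=\mathrm{Ell}(\phi)$, it intertwines $\Cu(\phi)$ with the map $\theta_0\sqcup\gamma^*$ of the text. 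Consequently, post-composition on $\Lsc_{\mathrm{nc}}(\T,-)$ with $\Cu(\phi)$ corresponds to $(\theta_0\sqcup\gamma^*)_*$, which is exactly the $\Lsc_{\mathrm{nc}}$-component of $F(\mathrm{Ell}(\phi))$, while on the component $\{0\}\sqcup(\K_1\times\K_0^{++})$ the identification carries $\V(\CC(\T,\phi))$ to $\theta_1\times\theta_0$. Chasing these commuting squares gives $\Cu_{\T}\simeq F\circ\mathrm{Ell}$ on the class in the statement.

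In particular $\Cu_{\T}$ factors, up to natural isomorphism, through $F(\mathcal I)$, so $G\circ\Cu_{\T}$ is defined; applying $G$ to the equivalence just obtained and using $G\circ F\simeq\mathrm{id}_{\mathcal I}$ (established before the statement) gives $G\circ\Cu_{\T}\simeq G\circ F\circ\mathrm{Ell}\simeq\mathrm{Ell}$. For the final assertion, a fully faithful functor preserves and reflects isomorphisms, so for $A,B$ in the class $\mathrm{Ell}(A)\cong\mathrm{Ell}(B)$ in $\mathcal I$ if and only if $F(\mathrm{Ell}(A))\cong F(\mathrm{Ell}(B))$, i.e.\ (by the first equivalence) if and only if $\Cu_{\T}(A)\cong\Cu_{\T}(B)$; since each of $\mathrm{Ell}$ and $\Cu_{\T}$ trivially takes isomorphic algebras to isomorphic invariants, one is a complete invariant (hence classifying) on this class exactly when the other is. I expect the one genuine obstacle to be the naturality bookkeeping of the third paragraph -- in particular, checking that the order-isomorphism of Theorem \ref{thm:zstable} is compatible with $^*$-homomorphisms and that the \cite{bpt} identification intertwines the functorial structures in precisely the form that matches $F$ on morphisms; the remaining steps only assemble natural transformations already in hand.
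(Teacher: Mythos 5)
Your proposal is correct and follows essentially the same route as the paper, whose entire proof is the one-line assembly of Theorem \ref{thm:zstable}, the identification $\Cu(A)\cong\K_0(A)^+\sqcup\mathrm{LAff}(\mathrm{T}(A))^{++}$ from \cite[Corollary 5.7]{bpt}, and the categorical equivalence $F$, $G$ established just before the statement. You merely make explicit the naturality bookkeeping (and the identification $\V(A)^*\cong\K_0(A)^{++}$ via cancellation of projections) that the paper leaves implicit.
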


\section*{Acknowledgements}
This work was carried out at the Centre de Recerca Matem\` atica (Bellaterra) during the programme ``The Cuntz Semigroup and the Classification of C$^*$-algebras'' in 2011. We gratefully acknowledge the support and hospitality extended to us. It is also a pleasure to thank N. Brown, I. Hirshberg, N. C. Phillips and H. Thiel for interesting discussions concerning the subject matter of this paper. The first, third and fourth authors were partially supported by a MEC-DGESIC grant (Spain) through Project MTM2008-06201-C02-01/MTM, and by the Comissionat per Universitats i Recerca de la Generalitat de Catalunya.
The second author  was partially supported by
NSF grant \#DMS--1101305.

\end{document}